\theoremstyle{plain}
\newtheorem{thm}{Theorem}[section]
\newtheorem{lem}[thm]{Lemma}
\newtheorem{prop}[thm]{Proposition}
\newtheorem{cor}[thm]{Corollary}
\theoremstyle{definition}
\newtheorem{defn}[thm]{Definition}
\theoremstyle{remark}
\newtheorem{rem}[thm]{Remark}
\numberwithin{equation}{section}
\DeclareMathOperator\id{id}
\DeclareMathOperator\im{im}
\DeclareMathOperator\End{End}
\newcommand{\Pin}{\mathsf{Pin}}
\newcommand{\GL}{\mathsf{GL}}
\newcommand{\Biv}{\operatorname{Biv}}
\newcommand{\FF}{\mathbb{F}}
\newcommand{\CC}{\mathbb{C}}
\newcommand{\ZZ}{\mathbb{Z}}
\newcommand{\U}{\mathcal{U}}
\renewcommand{\O}{\mathcal{O}}
\newcommand\kfrak{\mathfrak{k}}
\newcommand\pfrak{\mathfrak{p}}
\newcommand{\gfrak}{\mathfrak{g}}
\newcommand{\so}{{\mathfrak{so}}}
\newcommand{\grA}{{\overline A}}
\newcommand{\grd}{{\overline d}}
\newcommand{\grD}{\overline{D}}
\newcommand{\grx}{\overline{x}}
\newcommand{\gry}{{\overline{y}}}
\newcommand{\tiH}{{\tilde H}}
\newcommand{\tih}{{\tilde{h}}}
\newcommand{\tiG}{{\tilde G}}
\newcommand{\tig}{{\tilde g}}
\newcommand{\itriv}{_{\textnormal{triv}}}
\newcommand{\idet}{_{\textnormal{det}}}
\newcommand{\ieven}{^{\textnormal{even}}}
\newcommand{\iodd}{^{\textnormal{odd}}}
\newcommand{\inv}{^{-1}}
\newcommand{\tensor}{\otimes}
\newcommand{\eps}{\varepsilon}
\newcommand{\DeltaC}{{\Delta_C}}
\newcommand{\AC}{{A\tensor C}}
\newcommand{\tri}[1][\cdot,\cdot]{\langle #1 \rangle}
\newcommand{\hiddensubsection}{\stepcounter{subsection}
\medskip
\noindent\thesubsection\hspace{1em}}
\begin{document} 

\title{Barbasch-Sahi algebras and Dirac cohomology}
\author{Johannes Flake}

\begin{abstract} 
We define a class of algebras which are distinguished by a PBW property and an orthogonality condition, and which we call \emph{Hopf-Hecke algebras}, since they generalize the Drinfeld Hecke algebras defined by Drinfeld in \cite{Dr}. In the course of studying the orthogonality condition and in analogy to the orthogonal group we show the existence of a \emph{pin cover} for cocommutative Hopf algebras over $\CC$ with an orthogonal module or, more generally, pointed cocommutative Hopf algebras over a field of characteristic $0$ with an orthogonal module.

Following the suggestion of Dan Barbasch and Siddhartha Sahi, we define a Dirac operator and Dirac cohomology for modules of Hopf-Hecke algebras, generalizing those concepts for connected semisimple Lie groups, graded affine Hecke algebras and symplectic reflection algebras. Using the pin cover, we prove a general theorem for a class of Hopf-Hecke algebras which we call \emph{Barbasch-Sahi algebras}, which relates the central character of an irreducible module with non-vanishing Dirac cohomology to the central characters occurring in its Dirac cohomology, generalizing a result called ``Vogan's conjecture'' for connected semisimple Lie groups which was proved in \cite{HP}, analogous results for graded affine Hecke algebras in \cite{BCT} and for symplectic reflection algebras and Drinfeld Hecke algebras in \cite{Ci}.
\end{abstract}

\maketitle

\setcounter{tocdepth}{2}
\tableofcontents

\section{Introduction}
\hiddensubsection Let $\FF$ be a field of characteristic $0$. All vector spaces and tensor products are over $\FF$, all modules are left modules, and we will be working with finite-dimensional modules only.

We consider algebras $A$ of the form 
\[ A= (T(V)\rtimes H)/I_\kappa
\ ,
\]
where $H$ is a cocommutative Hopf algebra, $V$ is a finite-dimensional $H$-module, $T(V)$ is its tensor algebra, which is an $H$-module algebra, $T(V)\rtimes H$ is the semidirect/smash product, $\kappa:V\wedge V\to H$ is an $H$-linear map and $I_\kappa$ is the ideal in $T(V)\rtimes H$ generated by elements of the form $v\tensor w-w\tensor v-\kappa(v,w)$ for $v,w\in V$. We impose two additional conditions on the discussed structures:

First, we require $A$ to have a \emph{PBW property}, i.e., to be a flat deformation of $S(V)\rtimes H$. This means we require the natural surjection of $S(V)\rtimes H$ to the associated graded algebra of $A$ (with respect to the filtration of $T(V)$) to be an isomorphism. The PBW property depends on the map $\kappa$ in ways which have been studied extensively in the literature, starting with the work of Drinfeld in \cite{Dr}, Braverman and Gaitsgory in \cite{BG} and of Etingof and Ginzburg in \cite{EG}. We refer to the survey \cite{SW-survey}, the article \cite{WW} and the recent article \cite{Kh} which contains many references in its introduction. The implications for our special case are studied in \cite{FS}.

Second, we require that $V$ be \emph{orthogonal}, i.e., it should have a non-degenerate $H$-invariant symmetric bilinear form $\tri$. In this situation, let $C=C(V)$ be the Clifford algebra defined by $V$ and $\tri$, that is, the algebra $T(V)/J_\kappa$, where $J_\kappa$ is the ideal generated by elements of the form $v\tensor w+w\tensor v-2\tri[v,w]$. Since $V$ is orthogonal, this is an $H$-module algebra, as well.

In this paper we call algebras of this form \emph{Hopf-Hecke algebras}. They are determined by the cocommutative Hopf algebra $H$, its orthogonal module $V$ and the map $\kappa$. For a fixed Hopf-Hecke algebra $A$, let $(v_i)_i$, $(v^i)_i$ be a pair of dual bases of $V$ with respect to $\tri$.  We define a \emph{Dirac element} 
\[ D := \sum_i v_i \tensor v^i
\]
in $A\tensor C$, where we identify elements in $V$ with elements in $A$ and $C$, respectively. 

Let $S$ be a spin module of $C$. If $M$ is an $A$-module, then $M\tensor S$ is an $A\tensor C$-module, so in particular, $D$ acts on $M\tensor S$. We define the \emph{Dirac cohomology} of $M$ as
\[ H^D(M) := \ker D/(\ker D\cap\im D)
\ . 
\]

In this paper we show that, if $H$ is a pointed cocommutative Hopf algebra over $\FF$ with an orthogonal module $V$, then $H$ has what we call a \emph{pin cover} with respect to $V$, that is, there are a pointed cocommutative Hopf algebra $\tiH$, a surjective Hopf algebra map $\pi:\tiH\to H$ and an algebra map $\gamma:\tiH\to C$ such that $\pi$ splits as coalgebra map in a certain way, $\gamma$ preserves a certain $\ZZ_2$-gradation, and $\pi$, $\gamma$ and the action of $H$ on $V$ satisfy a compatibility condition (\ref{prop-pin-cover}). This generalizes the pin cover of a group acting on a vector space by orthogonal operators.

We define $\DeltaC:=(\pi\tensor\gamma)\circ\Delta:\tiH\to H\tensor C$ and $H':=\tiH/\ker\DeltaC$. We will see that $\tiH$ and also $H'$ act on the Dirac cohomology $H^D(M)$ via $\DeltaC$. We say that $D$ satisfies the \emph{Parthasarathy condition}, if 
\[ D^2 \in Z(A\tensor C)+\DeltaC(\tiH\ieven)
\ , 
\]
where $Z(\AC)$ is the center of $\AC$.

If a Hopf-Hecke algebra $A$ has a Dirac element $D$ that satisfies the Parthasarathy condition, we call $A$ a \emph{Barbasch-Sahi algebra} (since the idea of studying Dirac cohomology in such a general situation was developed by Dan Barbasch and Siddhartha Sahi). We prove:
\begin{prop}[\ref{algebraic-Vogans-conjecture}] If $A$ is a Barbasch-Sahi algebra, then there is an algebra map $\zeta:Z(A)\to Z(H')$ such that for every $z\in Z(A)$ there exists an element $b\in\AC$ such that 
\[ z\tensor1 = \DeltaC(\zeta(z))+bD+Db
\ .
\]
\end{prop}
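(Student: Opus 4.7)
The plan is to adapt the standard proof of Vogan's conjecture (as in \cite{HP} for real reductive groups and the analogues in \cite{BCT} and \cite{Ci}) to the Hopf-Hecke setting. I would first equip $\AC$ with the $\ZZ_2$-grading induced from $C$ and define the super-derivation $d(x) = Dx - (-1)^{|x|}xD$. Since every $v_i$ commutes with $z \in Z(A)$ in $A$, we have $d(z\tensor 1) = [D, z\tensor 1] = 0$. A direct computation gives $d^2(x) = [D^2, x]$, so by the Parthasarathy condition $D^2 = z_0 + \DeltaC(\tilde t)$ with $z_0\in Z(\AC)$ and $\tilde t\in\tiH\ieven$, and $d$ squares to the commutator with $\DeltaC(\tilde t)$.

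The heart of the proof will be the following cohomological key lemma: every $d$-closed even element $y\in\AC$ admits a presentation
\[
y = \DeltaC(\tilde y) + d(b), \qquad \tilde y \in \tiH\ieven,\ b \in \AC^{\textnormal{odd}}.
\]
My strategy is to pass to the associated graded algebra with respect to the PBW filtration, where the symbol $D_0 = \sum_i v_i\tensor v^i \in (S(V)\rtimes H)\tensor C$ satisfies $D_0^2 = \sum_i v_i v^i\tensor 1$, which is central, so the induced super-commutator $d_0$ is a genuine differential. A Koszul-type computation --- using that $C$ has associated graded $\Lambda(V)$ and that $D_0$ plays the role of the Koszul differential on $S(V)\tensor\Lambda(V)$ --- should identify the even $d_0$-cohomology with the image of $H$ under the coproduct, which is the symbol of $\DeltaC(\tiH\ieven)$. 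A standard inductive lifting from graded to filtered then yields the lemma for $\AC$ itself.

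With the lemma in hand, I would apply it to $y = z\tensor 1$ for $z \in Z(A)$ to get a distinguished class $\zeta(z) := [\tilde z] \in \tiH/\ker\DeltaC = H'$. Well-definedness follows from $\DeltaC(\tiH)\cap\im d = 0$, again visible in the graded picture. Additivity is immediate, and multiplicativity follows by combining the algebra-map property of $\DeltaC = (\pi\tensor\gamma)\circ\Delta$ with a Leibniz computation for $d$: the cross terms $\DeltaC(\tilde z_1)\, d(b_2)$ and $d(b_1)\,\DeltaC(\tilde z_2)$ are $d$-exact provided $d$ vanishes on $\DeltaC(\tiH\ieven)$, which should follow from the pin cover's compatibility relating $\pi$, $\gamma$, and the $H$-action on $V$. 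Centrality of $\zeta(z)$ in $H'$ comes from unwinding $[z\tensor 1, \DeltaC(\tilde h)] = 0$ through the lemma: the resulting equation $\DeltaC([\tilde z, \tilde h]) \in \im d$, together with $\DeltaC(\tiH)\cap\im d = 0$, forces $[\tilde z, \tilde h] \in \ker \DeltaC$.

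The main obstacle will be the graded cohomology computation in the key lemma. In the classical cases (where $H$ is essentially commutative) this reduces to Koszul duality between $S(V)$ and $\Lambda(V)$ with $H$ playing a passive role. Here $H$ is a general pointed cocommutative Hopf algebra acting on the orthogonal module $V$, and the Koszul differential $D_0$ interacts nontrivially with the $H$-action through the pin cover. Verifying that the resulting cohomology is precisely the image of $\DeltaC$ --- neither larger nor smaller --- will require careful use of the structural properties of the pin cover established earlier in the paper, in particular the coalgebra splitting of $\pi$ and the compatibility of $\gamma$ with the $H$-action on $V$.
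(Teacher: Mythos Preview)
Your overall strategy coincides with the paper's --- define the odd derivation $d(x)=Dx-(-1)^{|x|}xD$, compute the cohomology of its symbol $\grd$ on $(S(V)\rtimes H)\tensor C$ via a Koszul argument, then lift by induction along the PBW filtration --- but there is a gap in the lifting step. Your key lemma is stated for \emph{all} even $d$-closed elements and you invoke a ``standard inductive lifting''. That induction needs $d^2=0$: given $d(y)=0$ and a lift $b_0$ of the graded witness, one passes to $y_1:=y-\DeltaC(\tilde y_0)-d(b_0)$ and must have $d(y_1)=0$ to continue. You yourself have computed $d^2(b_0)=[\DeltaC(\tilde t),b_0]$, and this has no reason to vanish for a generic odd $b_0$, so $y_1$ is not $d$-closed and the induction stalls.

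The paper repairs this by restricting throughout to the $\tiH$-isotypic pieces $(\AC)\itriv$ and $(\AC)\idet$ (on which $\tih\in\tiH$ acts by $\eps(\tih)$ and by $(-1)^{|\tih|}\eps(\tih)$, respectively): for $\tilde t\in\tiH\ieven$ one checks $[\DeltaC(\tilde t),x]=0$ on both subspaces, so $d^2=0$ there, and the induction runs once the graded-to-filtered section is chosen $\tiH$-linearly (symmetrization) so that each $b_0$ lands in $(\AC)\idet$. This restriction is free for the application, since $z\tensor1$ is central in $\AC$ and hence lies in $(\AC)\itriv$. It also makes your separate arguments for centrality and well-definedness unnecessary: the refined identity is $\ker d\itriv=\im d\idet\oplus\DeltaC(Z(\tiH))$, so $\zeta(z)$ automatically comes from $Z(\tiH)$ (hence is central in $H'$), and uniqueness is exactly the directness of that sum.
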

In the context of connected semisimple Lie groups (\cite{HP}), this corresponds to the ``algebraic Vogan's conjecture'' and just as in that setting also here it implies the following (``Vogan's conjecture''):
\begin{thm}[\ref{Vogans-conjecture}] Assume $M$ is an $A$-module with central character $\chi$ and with non-zero Dirac cohomology $H^D(M)$ and let $(U,\sigma)$ be a non-zero $H'$-submodule of $H^D(M)$. Then $\chi=\sigma\circ\zeta$.
\end{thm}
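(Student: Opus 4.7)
The plan is to deduce this theorem as an essentially formal consequence of the algebraic Vogan's conjecture (Proposition~\ref{algebraic-Vogans-conjecture}): I would evaluate the identity
\[ z\tensor 1 = \DeltaC(\zeta(z))+bD+Db \]
on a representative of a Dirac cohomology class lying in $U$ and then read off the central character relation.

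In detail, fix $z\in Z(A)$ and let $x\in\ker D\subseteq M\tensor S$ represent a nonzero class $[x]\in U\subseteq H^D(M)$. Since $Dx=0$, applying the identity above to $x$ yields
\[ (z\tensor 1)x \;=\; \DeltaC(\zeta(z))x + Dbx. \]
I would then argue that $Dbx$ represents the zero class in $H^D(M)$: it is visibly in $\im D$, and it lies in $\ker D$ because it is the difference of two elements of $\ker D$, namely $(z\tensor 1)x$ (since $z\tensor 1\in Z(\AC)$ and therefore commutes with $D$) and $\DeltaC(\zeta(z))x$ (by the well-definedness of the $\tiH$-action on $H^D(M)$ via $\DeltaC$, which in particular forces $\DeltaC(\tiH)$ to preserve $\ker D$). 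Hence in $H^D(M)$ we have the equality $(z\tensor 1)[x] = \DeltaC(\zeta(z))[x]$.

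To finish, I would compute both sides as operators on $[x]\in U$. On the left, $z$ acts on $M$ by the scalar $\chi(z)$, so $z\tensor 1$ acts on $M\tensor S$, and hence on $[x]$, by $\chi(z)$. On the right, $\zeta(z)\in Z(H')$, so $\DeltaC(\zeta(z))$ acts on $U$ through the central character $\sigma$ by the scalar $\sigma(\zeta(z))$. Equating the two actions on the nonzero class $[x]$ gives $\chi(z)=\sigma(\zeta(z))$, and since $z\in Z(A)$ was arbitrary, we conclude $\chi=\sigma\circ\zeta$.

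The one nontrivial ingredient is the inclusion $\DeltaC(\tiH)\cdot\ker D\subseteq\ker D$ (or an analogous condition sufficient for the $\tiH$-action to descend to $H^D(M)$), which should already be available from the preparatory material on the $\tiH$-action set up earlier in the paper. Once that is in hand, the argument above is purely formal, the substantive input being Proposition~\ref{algebraic-Vogans-conjecture}; this is precisely why that proposition is called the algebraic version of Vogan's conjecture.
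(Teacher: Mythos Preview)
Your proposal is correct and follows essentially the same approach as the paper's proof: apply the identity from Proposition~\ref{algebraic-Vogans-conjecture} to a nonzero element of $U$ and read off the scalar equality. The only difference is expository---you spell out more carefully why $Dbx$ represents the zero class in $H^D(M)$ (it lies in $\im D$ and, being the difference of two elements of $\ker D$, also in $\ker D$), whereas the paper simply asserts that $(Da)\cdot x+(aD)\cdot x=0$; the needed fact that $\DeltaC(\tiH)$ preserves $\ker D$ is indeed established in the lemma immediately preceding the theorem (via \ref{commutation-D-tih}).
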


As mentioned in the introduction of \cite{HP}, the conjectures were formulated by David Vogan in the fall of 1997 in a series of talks at the MIT Lie groups seminar.

\hiddensubsection Our definition of Hopf-Hecke algebra generalizes several situations discussed in the literature in which the Dirac cohomology and its connection with central characters have been studied.

In Huang's and Pand{\v{z}}i{\'c}'s setting (\cite{HP}), where $\gfrak$ is the complexification of a real semisimple Lie algebra and  $\gfrak=\kfrak\oplus\pfrak$ is its Cartan decomposition, $H=\U(\kfrak)$, $V=\pfrak$ and $\kappa$ comes from the Lie bracket in $\gfrak$ (in particular, its image lies in $\kfrak\subset H$). 

Ciubotaru studied the situation where $H$ is the group algebra of a group $W$ acting faithfully and orthogonally on $V$ and the map $\kappa$ is general (\cite{Ci}), thereby extending the results of \cite{BCT}. The resulting family of algebras was introduced by Drinfeld in \cite{Dr} and the maps $\kappa$ yielding the PBW property were studied in \cite{RS} (see also \cite{SW}). The graded affine Hecke algebras studied by Lusztig (\cite{Lu}), the setting in \cite{BCT}, are a special case, just as symplectic reflection algebras introduced by Etingof and Ginzburg in \cite{EG} and the rational Cherednik algebras $\mathsf{H}_{t,c}$ with parameters $t,c$. Dirac operators and a Dirac cohomology are introduced for them in \cite{Ci}, and it is proved that graded affine Hecke algebras and also rational Cherednik algebras at $t=0$ are Barbasch-Sahi algebras (\cite{BCT,Ci}). In these situations, an analogue of Vogan's conjecture is shown to hold. 

A new example, as shown in \cite{FS}, consists of infinitesimal Cherednik algebras as introduced in \cite{EGG}. Just as in the setting of \cite{HP}, the Hopf algebra $H$ is the enveloping algebra of a Lie algebra $\gfrak$, but in contrast to the situation there, the map $\kappa$ takes values in arbitrary degrees of the algebra filtration of $\U(\gfrak)$.

\hiddensubsection \label{generalizations} The presented ideas could be further generalized in several directions:

One could study algebras similar to Hopf-Hecke algebras, but allowing $\kappa$ to take values in $H\oplus V$, and try to define cubic Dirac operators for those algebras following Kostant's ideas in \cite{Ko-multiplets,Ko-cubic}. The PBW property for the special case where $H$ is a group algebra is discussed in \cite{SW-orbifold}.

The cocommutative Hopf algebra $H$ could be replaced by a \emph{cocommutative algebra} as considered in \cite{Kh} or by a quasi-triangular (quasi-)Hopf algebra. $S(V)$ could be replaced by a general Koszul algebra, as discussed in \cite{WW, WW2}.

It would be interesting to try to generalize the discussed concept for a suitable subclass of \emph{generalized Weyl algebras} as considered in \cite{Kh-Weyl, KT}.

In situations, where the Dirac operator does not satisfy the Parthasarathy condition and results analogous to Vogan's conjecture cannot be shown through a direct generalization of the proof in \cite{HP}, one could think of generalizations in the spirit of \cite[thm.~3.14]{Ci}.

\hiddensubsection The construction of a pin cover for pointed cocommutative Hopf algebra is section~\ref{sec-pin-cover}, Hopf-Hecke algebras are defined and their Dirac cohomology is studied in section~\ref{sec-Hopf-Hecke}, and the connection with central characters is made in section~\ref{sec-Vogans-conjecture}.

\hiddensubsection \textbf{Acknowledgments.} The key ideas of this paper, that one should study Dirac cohomology for Hopf-Hecke algebras and establish an analogue of Vogan's conjecture, originated with Dan Barbasch and Siddhartha Sahi and was discussed by them extensively. I would like to thank them both for their generosity in sharing these ideas. In addition, I would like to thank Siddhartha Sahi for his continued encouragement and assistance in the course of developing this program and bringing these ideas to fruition.

I would also like to thank Apoorva Khare and Chelsea Walton for their valuable feedback on an earlier version of this paper.


\section{Pin cover of a pointed cocommutative Hopf algebra} 
\label{sec-pin-cover}
We will review some basic Hopf algebra theory, for more information on Hopf algebras we refer to \cite{Mo}.

We will always denote the counit of a Hopf algebra $H$ by $\eps:H\to\FF$, the coproduct by $\Delta:H\to H\tensor H$ and the antipode by $S:H\to H$. Also, we will use Sweedler's notation: If $H$ is a Hopf algebra (or coalgebra, for that matter) with coproduct $\Delta:H\to H\tensor H$ and $h$ is an element of $H$, then we will write $h_{(1)}\tensor h_{(2)}$ for the coproduct $\Delta(h)$ in $H\tensor H$, which is not necessarily a pure tensor, but in general a sum of several pure tensors, which is implied by this notation. Note that expressions like $h_{(1)}\tensor h_{(2)}\tensor h_{(3)}$ for the twofold coproduct are unambiguous due to coassociativity of coalgebras. Finally, we recall that every Hopf algebra $H$ acts on itself by the $\emph{(left) adjoint action}$ 
\[ h\cdot k = h_{(1)} k Sh_{(2)}
\]
for $h,k\in H$.

\begin{defn} A coalgebra $C$ with coproduct $\Delta:C\to C\tensor C$ is called \emph{pointed} if every simple subcoalgebra of $C$ is one-dimensional.
An element $c\in C$ is called \emph{group-like} if $\Delta c=c\tensor c$ and \emph{primitive} if $\Delta c=1\tensor c+c\tensor 1$. The set of group-likes is denoted by $G(C)$, the set of primitives is denoted by $P(C)$.
\end{defn}

We recall from basic Hopf algebra theory that for every Hopf algebra $H$, $G(H)$ is a group with multiplication in $H$ and $P(H)$ is a Lie subalgebra of $H$ with the commutator.

\begin{defn} If $H$ is a Hopf algebra and $B$ is an $H$-module algebra, then the \emph{semidirect/smash product} $B\rtimes H$ is the algebra generated by $B$ and $H$ and the relation $hb=(h_{(1)} \cdot b) h_{(2)}$ for all $h\in H,b\in B$.
\end{defn}

We recall the following structure theorem: if $H$ is a cocommutative pointed Hopf algebra over a field $\FF$ of characteristic $0$, then $H=\U(P(H))\rtimes \FF[G(H)]$, where $\U(P(H))$ is the universal enveloping algebra of the Lie algebra of primitive elements in $H$ and $\FF[G(H)]$ is the group algebra of the group of group-likes in $H$. This applies in particular to any cocommutative Hopf algebra over $\FF=\CC$, because every cocommutative coalgebra over an algebraically closed field is pointed.

Motivated by pin covers of groups acting orthogonally on a module, we define an analogous concept for pointed cocommutative Hopf algebras. Just as for groups, we start by defining orthogonal actions.

\begin{lem} \label{lem-orthogonal} Let $H$ be a cocommutative Hopf algebra (with counit $\eps$, coproduct $\Delta$ and antipode $S$) and let $V$ be a finite-dimensional $H$-module with non-degenerate symmetric bilinear form $\tri$. We denote the action of $h\in H$ on $v\in V$ by $h \cdot v$. Consider the following conditions:
\begin{enumerate}
\item{$\tri[h_{(1)} \cdot v,h_{(2)} \cdot w]=\eps(h)\tri[v,w]$ for all $h\in H,v,w\in V$.}
\item{$\tri[h \cdot v,w]=\tri[v,S(h) \cdot w]$ for all $h\in H,v,w\in V$.}
\item{Group-like elements of $H$ act as orthogonal linear operators on $V$ and primitive elements of $H$ act as skew-adjoint linear operators on $V$ with respect to $\tri$.}
\end{enumerate}

Then $(1)\Leftrightarrow(2)\Rightarrow(3)$ and if $H$ is pointed (e.g., $\FF=\CC$), then even $(1)\Leftrightarrow(2)\Leftrightarrow(3)$.
\end{lem}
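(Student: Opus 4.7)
The plan is to prove the two implications $(1)\Leftrightarrow(2)$ by direct Sweedler-calculus manipulation with the antipode axiom, then get $(2)\Rightarrow(3)$ essentially for free by evaluating on the special elements, and finally establish $(3)\Rightarrow(1)$ in the pointed case by showing the set of ``good'' $h$'s is a subalgebra and invoking the structure theorem $H=\U(P(H))\rtimes\FF[G(H)]$ quoted in the text.

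First I would do $(2)\Rightarrow(1)$, which is a one-liner: apply (2) inside the first slot, then collapse using the antipode axiom $S(h_{(1)})h_{(2)}=\eps(h)1$,
\[
 \sum \tri[h_{(1)}\cdot v, h_{(2)}\cdot w] = \sum \tri[v, S(h_{(1)})h_{(2)}\cdot w] = \eps(h)\tri[v,w].
\]
For $(1)\Rightarrow(2)$ I would use the three-fold Sweedler identity $h_{(1)}\tensor h_{(2)}S(h_{(3)}) = h\tensor 1$, which follows from the antipode axiom applied to the inner coproduct of $h_{(2)}$. Then
\[
 \tri[h\cdot v,w] = \sum\tri[h_{(1)}\cdot v,\, h_{(2)}\cdot(S(h_{(3)})\cdot w)];
\]
treating $h_{(3)}$ as a fixed index and applying (1) to the $(1),(2)$-pair (whose partial sum is the coproduct of the element shielded by $h_{(3)}$) yields $\sum \eps(h_{(1,2)})\tri[v,S(h_{(3)})\cdot w]=\tri[v,S(h)\cdot w]$. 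The only thing to be careful about is interpreting the partial Sweedler sums using coassociativity.

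The implication $(2)\Rightarrow(3)$ is immediate: for $g\in G(H)$ one has $S(g)=g\inv$, so (2) reads $\tri[g\cdot v,w]=\tri[v,g\inv\cdot w]$, i.e.\ $g$ is orthogonal; for $x\in P(H)$ one has $S(x)=-x$, so (2) reads $\tri[x\cdot v,w]=-\tri[v,x\cdot w]$, i.e.\ $x$ is skew-adjoint.

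The substantive step is $(3)\Rightarrow(1)$ in the pointed case. Here I would show that $\Hcal:=\{h\in H:\text{(1) holds for all }v,w\in V\}$ is a subalgebra of $H$. Linearity and $1\in\Hcal$ are obvious; for closure under multiplication one uses $\Delta(h_1 h_2)=\Delta(h_1)\Delta(h_2)$ and applies (1) first to the $(h_1)$-part with arguments $(h_2)_{(1)}\cdot v$, $(h_2)_{(2)}\cdot w$, producing $\eps(h_1)$, and then to the $(h_2)$-part, producing $\eps(h_2)\tri[v,w]=\eps(h_1 h_2)\tri[v,w]$. The equivalence of (1) with the orthogonal/skew-adjoint conditions on group-likes and primitives, already observed for $(2)\Rightarrow(3)$, shows that (3) forces $G(H)\cup P(H)\subseteq\Hcal$. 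By the structure theorem $H=\U(P(H))\rtimes\FF[G(H)]$, these elements generate $H$ as an algebra, so $\Hcal=H$, establishing (1). The main obstacle is conceptual rather than computational: one has to notice that although condition (1) is quadratic in $h$ through $\Delta$, the subalgebra closure goes through cleanly precisely because $\Delta$ is a homomorphism and $\eps$ is multiplicative; once this is seen, the appeal to the structure theorem finishes the proof.
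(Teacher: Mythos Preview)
Your proof is correct and follows essentially the same route as the paper: the Sweedler manipulations for $(1)\Leftrightarrow(2)$ and the specialization to group-likes/primitives for $(2)\Rightarrow(3)$ are identical to the paper's. The only minor difference is that for the pointed case the paper closes the loop via $(3)\Rightarrow(2)$ (observing that the set of $h$ satisfying $(2)$ is a subalgebra because $S$ is an anti-algebra map), whereas you close it via $(3)\Rightarrow(1)$ using multiplicativity of $\Delta$ and $\eps$; both arguments reduce to the same structure-theorem punchline.
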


\begin{proof} Let $h$ be an element in $H$ and let $v,w$ be elements in $V$. If we assume (1) holds, then 
\[ \tri[h \cdot v,w]=\tri[h_{(1)} \cdot v,h_{(2)} \cdot (S(h_{(3)}) \cdot w)]
 = \eps(h_{(1)}) \tri[v,S(h_{(2)}) \cdot w]
 =\tri[v,S(h) \cdot w]
\ ,
\]
so (2) holds. If we assume that (2) holds, then
\[ \tri[h_{(1)} \cdot v,h_{(2)} \cdot w]=\tri[v,S(h_{(1)}) \cdot (h_{(2)} \cdot w)]=\eps(h)\tri[v,w]
\ ,
\]
which proves (1). So (1) and (2) are equivalent.

(3) follows from (2), because $S(h)=h^{-1}$ for a group-like element $h\in H$ and $S(h)=-h$ for a primitive element $h\in H$, respectively.

(2) follows from (3) if $H$ is pointed, because by the structure theorem, $H$ is generated as algebra by group-likes and primitives then, and $S$ is an anti-algebra map. 
\end{proof}

\begin{defn} In the situation of the lemma, the non-degenerate symmetric bilinear form $\tri$ on $V$ is called \emph{$H$-invariant} if it satisfies the above equivalent conditions. The finite-dimensional module $V$ of $H$ is called \emph{orthogonal} if $V$ has an $H$-invariant non-degenerate symmetric bilinear form. 
\end{defn}

Before showing the existence of a pin cover for pointed cocommutative Hopf algebras $H$ (including all cocommutative Hopf algebras over $\CC$), we define a $\ZZ_2$-grading for those $H$:

\begin{defn} \label{def-gradation-H} Let $H$ be a cocommutative pointed Hopf algebra and $V$ an orthogonal $H$-module. We define a $\ZZ_2$-gradation of $H$ as algebra denoted by $|\tih|\in\{0,1\}\simeq\ZZ_2$ or by $(-1)^{|\tih|}\in\{\pm1\}\simeq\ZZ_2$ for homogeneous elements $h\in H$ by assigning
$(-1)^{|h|}=\det h|_V$ for group-like elements $h$ and $(-1)^{|h|}=1$ for primitive-elements $h$. We denote the even and odd subspace by $H\ieven$ and $H\iodd$, respectively.
\end{defn}

\begin{rem} Since the gradation is defined using group-likes and primitives, $H\ieven$ and $H\iodd$ are subcoalgebras for every pointed cocommutative Hopf algebra $H$.
\end{rem}

In the following we will make use of results on the Clifford algebra $C(V)=C(V,\tri)$ which is the quotient of the tensor algebra $T(V)$ by the ideal generated by elements of the form $vw+wv-2\tri[v,w]$ for $v,w\in V$, the subgroup $\Pin(V)$ of $C(V)^\times$ generated by elements $v$ for $v\in V$ and $\tri[v,v]=1$, and the Lie subalgebra $\Biv(V)$ of $C(V)$ (with the commutator) generated by elements $vw$ for $v,w\in V$. We refer to \cite{HP-book, Me} for details. In particular, we will make use of the following lemmas: 

\begin{lem} \label{lem-Pin} There is group epimorphism $\Phi:\Pin(V)\to O(V)$ which maps the unit vector $v\in V$ viewed as element in $\Pin(V)$ to the reflection $\tau_v$ in the hyperplane orthogonally to $v$. For any $w\in V$, the element $\tau_v(w)$ equals $-vwv$ in $C(V)$.
\end{lem}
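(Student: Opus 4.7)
The plan is to construct $\Phi$ via the twisted conjugation action of $C(V)^\times$ on $C(V)$, namely $\Phi(x)(w) := \alpha(x)\, w\, x\inv$, where $\alpha$ is the grade involution of $C(V)$ (the algebra automorphism extending $v\mapsto -v$ on $V$). First I verify the explicit formula stated in the lemma: for a unit vector $v\in V$ one has $v^2 = \tri[v,v] = 1$ in $C(V)$, hence $v\inv = v$, and so $\Phi(v)(w) = -vwv$. Using the defining Clifford relation $vw+wv = 2\tri[v,w]$, a direct computation gives
\[
-vwv = -(2\tri[v,w] - wv)\,v = -2\tri[v,w]\, v + w = \tau_v(w),
\]
which establishes the formula and in particular shows that $\Phi(v)$ preserves $V$.

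Next I would check that $\Phi$ restricts to a well-defined group homomorphism $\Pin(V)\to O(V)$. Multiplicativity $\Phi(xy) = \Phi(x)\Phi(y)$ is immediate from the fact that $\alpha$ is an algebra homomorphism; the use of the \emph{twisted} conjugation (rather than plain conjugation) is precisely what ensures the action stabilizes $V\subset C(V)$ on arbitrary products of odd generators. Since $\Pin(V)$ is generated by unit vectors and each $\Phi(v) = \tau_v$ lies in $O(V)$, the image of $\Phi$ is contained in $O(V)$.

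Finally, surjectivity onto $O(V)$ is the Cartan-Dieudonn\'e theorem: every orthogonal transformation of the finite-dimensional quadratic space $V$ is a product of finitely many hyperplane reflections $\tau_{v_i}$, and each $v_i$ may be rescaled so that $\tri[v_i,v_i] = 1$. Such a product equals $\Phi(v_1\cdots v_k)$ with $v_1\cdots v_k\in\Pin(V)$. The only genuinely subtle point in the whole argument is the need for the grade-involution twist in the conjugation formula, in order to land inside $\Hom(V,V)$; everything else is a direct verification or a citation of the classical Cartan-Dieudonn\'e theorem.
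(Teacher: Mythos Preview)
The paper does not supply its own proof of this lemma; it is stated as a standard fact about Pin groups with a pointer to \cite{HP-book, Me}, and your argument is precisely the classical one found in such references. One small caveat on the surjectivity step: your claim that ``each $v_i$ may be rescaled so that $\tri[v_i,v_i]=1$'' requires every nonzero value of the quadratic form to be a square in $\FF$, which is automatic over $\CC$ (the case the paper is primarily interested in) but can fail over a general field of characteristic~$0$; this is really an imprecision in the paper's own formulation of the lemma and its definition of $\Pin(V)$ rather than a defect in your approach. (A side remark: the role of the grade-involution twist is not so much to make the conjugation stabilize $V$---untwisted conjugation by a unit vector also sends $V$ to $V$, giving $-\tau_v$---as to ensure that $\Phi(v)$ equals $\tau_v$ on the nose, so that the image is generated by genuine reflections and Cartan--Dieudonn\'e applies directly.)
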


\begin{lem} \label{lem-Biv} There is a Lie algebra isomorphism $\phi:\Biv(V)\to\so(V)$ which maps a bivector $w_1w_2$ in $\Biv(V)$ to the map $v\mapsto [w_1w_2,v]$ in $\so(V)$, where the commutator is taken in $C(V)$ leaving $V\subset C(V)$ invariant.
The action of $O(V)$ on $T(V)$ induces an action of $O(V)$ on $\Biv(V)$, and $\phi$ is $O(V)$ equivariant, where $\so(V)$ is an $O(V)$-module under the action of $O(V)$ by conjugation.
\end{lem}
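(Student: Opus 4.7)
The plan is to verify each claim of the lemma in turn, beginning with the fact that the commutator $[w_1w_2,\cdot]$ stabilizes $V\subset C(V)$. First I would unfold the Clifford relation $vw+wv=2\tri[v,w]$ twice to compute
\[
[w_1w_2,v] = w_1w_2v - vw_1w_2 = 2\tri[w_2,v]w_1 - 2\tri[w_1,v]w_2 \in V
\]
for all $v,w_1,w_2\in V$. This already shows (i) that $\phi$ is well-defined as a map $\Biv(V)\to\End(V)$ when extended linearly from bivectors, and (ii) that the resulting operator is skew-adjoint with respect to $\tri$, since $\tri[\phi(w_1w_2)v,v'] = 2\tri[w_2,v]\tri[w_1,v'] - 2\tri[w_1,v]\tri[w_2,v']$ is skew in $v,v'$. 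Hence the image lies in $\so(V)$.

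Next I would show $\phi$ is a Lie algebra homomorphism. Since $\Biv(V)$ is a Lie subalgebra of $C(V)$ under the commutator, and since the operation ``$[-,v]$'' is a derivation in the sense that the Jacobi identity in $C(V)$ gives
\[
[[b_1,b_2],v] = [b_1,[b_2,v]] - [b_2,[b_1,v]]
\]
for $b_1,b_2\in\Biv(V)$ and $v\in V$, the right-hand side is precisely $[\phi(b_1),\phi(b_2)](v)$ in $\End(V)$, so $\phi$ preserves brackets. For injectivity, the explicit formula above shows that if $b=\sum_{i<j}c_{ij}e_ie_j$ in an orthonormal basis $(e_i)$, then $\phi(b)(e_k)=2\sum_i c_{ik}e_i - 2\sum_j c_{kj}e_j$, so $\phi(b)=0$ forces all $c_{ij}=0$. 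A dimension count ($\dim\Biv(V)=\binom{n}{2}=\dim\so(V)$, using the standard filtration of $C(V)$) then gives surjectivity; alternatively, one could exhibit a preimage of an arbitrary skew-symmetric matrix explicitly from the formula just computed.

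For the $O(V)$-equivariance, the action of $O(V)$ on $T(V)$ preserves the defining ideal of $C(V)$ because $\tri$ is $O(V)$-invariant, so it induces an action on $C(V)$ that restricts to an action on $\Biv(V)$ by $g\cdot(w_1w_2)=(gw_1)(gw_2)$. Applying the explicit formula,
\[
\phi(g\cdot(w_1w_2))(v) = 2\tri[gw_2,v]\,gw_1 - 2\tri[gw_1,v]\,gw_2,
\]
and using the orthogonality $\tri[gw,v]=\tri[w,g\inv v]$ I would rewrite this as $g\bigl(\phi(w_1w_2)(g\inv v)\bigr)$, which is exactly the conjugation action of $g$ on $\phi(w_1w_2)\in\so(V)$.

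The main obstacle is just the first step: getting the Clifford-relation bookkeeping right so that the commutator stays in $V$ and has the clean form $2\tri[w_2,v]w_1 - 2\tri[w_1,v]w_2$; once this identity is in hand, the homomorphism property, the dimension-count isomorphism, and the $O(V)$-equivariance all follow mechanically from it.
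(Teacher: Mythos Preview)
Your argument is correct and in fact more complete than the paper's, which simply declares the Lie algebra isomorphism ``well-known'' and only proves the $O(V)$-equivariance. The routes to equivariance differ: the paper reduces to reflections $\tau_v$ (which generate $O(V)$) and computes inside the Clifford algebra using $\tau_v(w)=-vwv$, obtaining $\phi(\tau_v\cdot(w_1w_2))(w)=[vw_1w_2v,w]=\tau_v\circ\phi(w_1w_2)\circ\tau_v^{-1}(w)$; you instead use your explicit formula $[w_1w_2,v]=2\tri[w_2,v]w_1-2\tri[w_1,v]w_2$ together with $\tri[gw,v]=\tri[w,g^{-1}v]$ to handle all $g\in O(V)$ at once. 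Your approach is more self-contained and avoids the reduction to generators; the paper's approach dovetails with the description of $\Pin(V)$ via reflections used elsewhere in the section. One small caveat: over a general field $\FF$ of characteristic~$0$ an \emph{orthonormal} basis need not exist, but an orthogonal basis (with $\tri[e_i,e_i]=\lambda_i\neq0$) always does, and your injectivity computation goes through unchanged with the extra scalars $\lambda_i$.
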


\begin{proof} The Lie algebra isomorphism is well-known.

The action of $O(V)$ on $T(V)$ induces an action on $C(V)$, since $V$ is an orthogonal module of $O(V)$, and the bivectors are invariant under this action.

To see that $\phi$ is $O(V)$-equivariant, we only have to consider reflections $\tau_v$ in $O(n)$. Assume $v$ is a unit vector in $V$ and $w_1w_2$ is a bivector in $C(V)$. Then for any $w\in V$,
\begin{align*}
 \phi(\tau_v \cdot (w_1w_2))(w)
 &= [\tau_v \cdot (w_1w_2),w]
 = [(-vw_1v)(-vw_2v),w]
 = [vw_1w_2v,w] \\
 &= vw_1 w_2vwvv - vvwvw_1w_2v
 = -v[w_1 w_2,-vwv]v
 = \tau_v \circ \phi(w_1 w_2)\circ \tau_v\inv (w)
\ ,
\end{align*}
as desired. 
\end{proof}

\begin{defn} Let $H,\tiH$ be Hopf algebras and $\pi:\tiH\to H$ a Hopf algebra epimorphism. We call $(\tiH,\pi)$ a \emph{double cover} of $H$ if $\tiH\cong H\oplus H$ as coalgebras and if $\pi$ restricts to an isomorphism of coalgebras on each copy of $H$.
\end{defn}

\begin{prop} \label{prop-pin-cover} Let $H$ be a cocommutative pointed Hopf algebra and $V$ an orthogonal $H$-module. Then there is a pointed cocommutative Hopf algebra $\tiH$, a Hopf algebra epimorphism $\pi:\tiH\to H$ and an algebra homomorphism $\gamma:\tiH\to C(V)$ such that $(\tiH,\pi)$ is a double cover of $H$, $\gamma$ preserves the $\ZZ_2$-gradation and the equation
\begin{equation} \label{eq-pin-cover}
(-1)^{|\tih|} \pi(\tih) \cdot v = \gamma(\tih_{(1)}) v \gamma(S\tih_{(2)})
\end{equation} 
holds in $C(V)$ for $v\in V$ and homogeneous elements $\tih\in\tiH$ (where the $\ZZ_2$-gradation of $\tiH$ is the one defined in \ref{def-gradation-H} using the fact that $V$ is an orthogonal $\tiH$-module via $\pi$).
\end{prop}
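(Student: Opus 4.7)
The plan is to use the structure theorem for pointed cocommutative Hopf algebras in characteristic zero to split the problem into a group-like part and a primitive part, each of which lifts into $C(V)$ via the classical pin cover and the bivector isomorphism respectively, and then reassemble the two lifts via a smash product. Write $\gfrak:=P(H)$ and $G:=G(H)$, so that $H\cong\U(\gfrak)\rtimes\FF[G]$; by Lemma~\ref{lem-orthogonal} the action of $H$ on $V$ restricts to a group homomorphism $\rho\colon G\to O(V)$ and to a Lie algebra homomorphism $\tau\colon\gfrak\to\so(V)$.

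First I would form the pullback $\tiG:=G\times_{O(V)}\Pin(V)$ along $\rho$ and the epimorphism $\Phi$ of Lemma~\ref{lem-Pin}; this is a central $\ZZ_2$-extension of $G$, so $\FF[\tiG]\to\FF[G]$ is a double cover of Hopf algebras, and the second projection composed with the inclusion $\Pin(V)\hookrightarrow C(V)$ yields an algebra map $\gamma_G\colon\FF[\tiG]\to C(V)$. By Lemma~\ref{lem-Biv}, the composition $\phi\inv\circ\tau\colon\gfrak\to\Biv(V)$ extends uniquely to an algebra map $\gamma_\gfrak\colon\U(\gfrak)\to C(V)$. I then set $\tiH:=\U(\gfrak)\rtimes\FF[\tiG]$, with $\tiG$ acting on $\gfrak$ through $\pi\colon\tiG\to G$ and the adjoint action of $G$ on $\gfrak\subset H$. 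The map $\pi$ extends to a Hopf algebra surjection $\tiH\to H$, and the coalgebra splitting $\FF[\tiG]\cong\FF[G]\oplus\FF[G]$ induces $\tiH\cong H\oplus H$ as coalgebras, supplying the double cover property.

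The hard part, which I expect to be the main obstacle, is to check that $\gamma_G$ and $\gamma_\gfrak$ glue to a well-defined algebra map $\gamma\colon\tiH\to C(V)$. The smash product relation $\tig x=(\pi(\tig)\cdot x)\tig$ forces the identity
\[
\gamma_G(\tig)\,\gamma_\gfrak(x)\,\gamma_G(\tig)\inv=\gamma_\gfrak(\pi(\tig)\cdot x)\qquad(\tig\in\tiG,\,x\in\gfrak),
\]
which is precisely the $\tiG$-equivariance of $\phi\inv\circ\tau$. I would deduce this from two ingredients: the $O(V)$-equivariance of $\phi$ in Lemma~\ref{lem-Biv}, so that conjugation by $\gamma_G(\tig)\in\Pin(V)$ on $\Biv(V)$ corresponds to conjugation by $\Phi(\tig)=\rho(\pi(\tig))$ on $\so(V)$; and the $G$-equivariance of $\tau$, which follows from the $H$-module identity $(gxg\inv)\cdot v=g\cdot(x\cdot(g\inv\cdot v))$. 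Once these are in place the gluing is formal.

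Finally I would verify the grading condition and~\eqref{eq-pin-cover}. Preservation of the $\ZZ_2$-grading holds because $\gamma_\gfrak(\U(\gfrak))\subset\Biv(V)\subset C(V)\ieven$ and because a product $v_1\cdots v_k\in\Pin(V)$ of unit vectors lies in $C(V)\ieven$ iff $k$ is even and has determinant $(-1)^k$ on $V$, matching~\ref{def-gradation-H}. For~\eqref{eq-pin-cover} I would check the two types of generators separately: for primitive $x\in\gfrak$ the right-hand side collapses to $[\gamma_\gfrak(x),v]=\phi(\gamma_\gfrak(x))(v)=\tau(x)(v)=\pi(x)\cdot v$ and $|x|=0$; for $\tig\in\tiG$ it becomes $\gamma_G(\tig)\,v\,\gamma_G(\tig)\inv$, which by the formula $\tau_v(w)=-vwv$ of Lemma~\ref{lem-Pin} evaluates to $(-1)^{|\tig|}\Phi(\tig)(v)=(-1)^{|\tig|}\pi(\tig)\cdot v$. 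A short induction using the antimultiplicativity of $S$ and the multiplicativity of the $\ZZ_2$-grading then extends~\eqref{eq-pin-cover} from generators to all of $\tiH$.
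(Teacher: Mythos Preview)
Your proposal is correct and follows essentially the same route as the paper: both decompose $H$ via the structure theorem into $\U(\gfrak)\rtimes\FF[G]$, lift the group part by the pullback $\tiG=G\times_{O(V)}\Pin(V)$ and the Lie part by $\phi\inv\circ\tau$, assemble $\tiH=\U(\gfrak)\rtimes\FF[\tiG]$, verify the gluing compatibility $\gamma_G(\tig)\gamma_\gfrak(x)\gamma_G(\tig)\inv=\gamma_\gfrak(\pi(\tig)\cdot x)$ via the $O(V)$-equivariance of $\phi$ and the $G$-equivariance of $\tau$, and then check~\eqref{eq-pin-cover} and the grading on the generators. The only place where the paper is slightly more explicit than your sketch is the intermediate step identifying conjugation by $\gamma_G(\tig)\in\Pin(V)$ on $\Biv(V)$ with the induced $O(V)$-action on $\Biv(V)$ (the computation $v\,w_1w_2\,v=(\tau_v w_1)(\tau_v w_2)$ for a unit vector $v$), which you implicitly fold into your invocation of Lemma~\ref{lem-Biv}; this is a one-line verification and not a gap.
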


\begin{proof}
By the structure theorem of cocommutative pointed Hopf algebras over a field of characteristic $0$, we know that $H=\U(L)\rtimes\FF[G]$ for the group $G=G(H)$, its group algebra $\FF[G]$, the Lie algebra $L=P(H)$ of primitive elements in $H$, the universal enveloping algebra $\U(L)$ and an action of $G$ on $L$ which makes $\U(L)$ an $\FF[G]$-module algebra.

Since $V$ is an orthogonal module, there is a group homomorphism $G\to O(V)$. This implies there is a pin cover $(\tiG,\pi_G,\gamma_G)$ of the group $G$, that is a double cover $(\tiG,\pi_G:\tiG\to G)$ of $G$ and a group homomorphism $\gamma_G:\tiG\to\Pin(V)$ such that
\begin{equation} \label{eq-pin-cover-G}
(-1)^{|\tig|} \pi_G(\tig) \cdot v
 = \gamma_G(\tig)v\gamma_G(\tig\inv)
\end{equation}
in $C(V)$ for all $\tig\in\tiG,v\in V$. More explicitly, $\tiG$, $\pi_G$ and $\gamma_G$ arise as the pullback of the map $\Phi$ from \ref{lem-Pin} and the representation map $G\to O(V)$: 
\[ \label{cd-tiG}
\xymatrix{
 \tiG 
 		\ar@{-->}[r]^-{\gamma_G} 
 		\ar@{-->>}[d]^-{\pi_G} &
 \Pin(V)~
 		\ar@{^{(}->}[r]
 		\ar@{>>}[d]^\Phi &
 C(V) \\
 G
 		\ar[r] &
 O(V)
} 
\ . 
\]

Also since $V$ is an orthogonal module, there is a Lie algebra homomorphism $L\to\so(V)$, which by \ref{lem-Biv} implies there is a Lie algebra homomorphism $\gamma_L:L\to\Biv(V)$ such that 
\begin{equation} \label{eq-pin-cover-gfrak}
x \cdot v = [\gamma_L(x),v]
\end{equation}
in $C(V)$ for all $x\in L,v\in V$. More explicitly, note that the identification $\End(V)\simeq V\tensor V$ induces a map $\so(V)\to V\wedge V$. Then $\gamma_L$ sends the skew-symmetric endomorphism of $V$ corresponding to $v\wedge w$ to $\tfrac14(vw-wv)$ in $C(V)$.

Now we define $\tiH:=\U(L)\rtimes\FF[\tiG]$, where $\tiG$ acts on $L$ through the projection $\pi_G$ and the action of $G$ on $L$. $\pi_G$ extends to a Hopf algebra epimorphism $\pi_G:\FF[\tiG]\to\FF[G]$ which extends to a Hopf algebra epimorphism $\pi:\tiH\to H$ if
\[ \tig \cdot x
 = \pi_G(\tig) x \pi_G(\tig\inv)
\]
in $H$ for all $\tig\in\tiG,x\in L$, but both sides are equal to $\pi_G(\tig) \cdot x$, so this is true. We also note that $\pi$ splits as coalgebra map, because it maps group-likes to group-likes and primitives to primitives, and by construction $(\tiH,\pi)$ is a double cover of $H$, because $(\tiG,\pi_G)$ was a double cover of $G$.

Similarly, $\gamma_G$ and $\gamma_L$ can be extended to an algebra homomorphism $\gamma:\tiH\to C(V)$ if
\[ \gamma_L(\tig \cdot x)
 = \gamma_G(\tig) \gamma_L(x) \gamma_G(\tig\inv)
\]
in $C(V)$ for all $\tig\in\tiG,x\in L$, which is what we are going to prove in the following. We consider a reflection $\tau_v\in O(V)$, where $v$ is a unit vector in $V$. Then for any $w_1,w_2\in V$,
\[ v w_1 w_2 v
 = (-v w_1 v) (-v w_2 v)
 = (\tau_v \cdot w_1) (\tau_v \cdot w_2)
 = (\tau_v \cdot w_1) (\tau_v \cdot w_2) 
\]
in $C(V)$, so if $\tig$'s action on $V$ equals the product of reflections $\tau_{v_1}\dots\tau_{v_k}$ with unit vectors $v_1,\dots,v_k$ and if $\gamma_L(x)=w_1 w_2$, then
\[ \gamma_G(\tig) \gamma_L(x)
 \gamma_G(\tig\inv)
 = (\tig \cdot w_1)(\tig \cdot w_2)
 = \tig \cdot \gamma_L(x)
\ ,
\]
where we use the action of $\tiG$ on $C(V)$ induced by the action of $\tiG$ on $T(V)$. So the map $\gamma$ above is well-defined if $\gamma_L$ is $\tiG$-equivariant. Let us write $\gamma_L=\phi\inv\circ\psi$ with the Lie algebra map $\psi:L\to\so(V)$ corresponding to the action of $L$ on $V$ and the Lie algebra isomorphism $\phi$ as in \ref{lem-Biv}. Now $\so(V)$ is a $\tiG$-module through the projection $\pi_G$ and the action of $G$ on $V$, and $\psi$ is $\tiG$-equivariant, because
\[ \tig \cdot \psi(x)
 = (v\mapsto \pi_G(\tig) \psi(x) \pi_G(\tig\inv)v)
 = \psi(\pi_G(\tig) \cdot x)
 = \psi(\tig \cdot x)
\ ,
\]
where the second equality is due to the fact that the actions of $G$ and $L$ on $V$ come from an action of the smash-product $H$ on $V$. We have also seen that $\phi$ is $O(V)$-invariant in \ref{lem-Biv}. So $\gamma_L$ is $\tiG$-equivariant, and $\gamma$ is well-defined.

We want to show equation \eqref{eq-pin-cover}. Now it is enough to verify it for a set of algebra generators of $\tiH$, but this is just the content of \eqref{eq-pin-cover-G} (for group-likes) and \eqref{eq-pin-cover-gfrak} (for primitives).

It can be easily checked that $\gamma$ preserves the $\ZZ_2$-gradation considering group-likes and primitives, and by definition the action of $\tiH$ on $V$ via $\pi$ is orthogonal.
\end{proof}

\begin{defn} \label{defn-pin-cover} Let $H$ be a pointed cocommutative Hopf algebra with an orthogonal module $V$. The triple $(\tiH,\pi,\gamma)$ constructed in \ref{prop-pin-cover} is called \emph{pin cover} of $H$ with respect to $V$.
For any pin cover $(\tiH,\pi,\gamma)$ of $(H,V)$ we define a \emph{diagonal map} 
\[ \DeltaC:\tiH\to H\tensor C(V),
\tih\mapsto\pi(\tih_{(1)})\tensor\gamma(\tih_{(2)})
\ . 
\]
We also define the quotient algebra \label{def-tiHprime}
\[ H':=\tiH/\ker\DeltaC
\ . 
\]
\end{defn}

\begin{rem} \newcommand{\tie}{{\tilde e}} \label{rem-pin-cover}
The diagonal map $\DeltaC$ is an algebra map, because $\Delta,\pi,\gamma$ are algebra maps. We note that $\DeltaC$ is never injective. Consider the identity element $1$ of $H$ which acts trivially on $V$. It can be identified with the identity element $e$ of the group of group-like elements of $H$. In the pin cover of this group, it corresponds to two elements $\tie, \tie^-$ such that, as elements in $C(V)$, $\tie=1$ and $\tie^-=-1$. Hence $\gamma(\tie)=1=-\gamma(\tie^-)$, and consequently,
\[ \DeltaC(\tie)=e\tensor 1=-\DeltaC(\tie^-)
\ ,
\]
so $\DeltaC(\tie+\tie^-)=0$.

However, we observe that the pin cover splits in certain cases. For instance, if $H$ is the universal enveloping algebra of a Lie algebra $\gfrak$, then the identity element $1$ is the only group-like element of $H$. Now the map $\iota:H\to\tiH$ which sends $h\mapsto h\tensor1$ in $H\tensor\FF[\ZZ_2]\simeq \tiH$ for all $h\in H$ is a Hopf algebra map such that $\pi\circ\iota=\id_H$, so $\pi:\tiH\to H$ splits as Hopf algebra map, and $H$ can be identified with the Hopf subalgebra $\iota(H)$ of $\tiH$.
\end{rem}

\begin{defn} We say that a pin cover $(\tiH,\pi,\gamma)$ \emph{splits}, if the epimorphism $\pi:\tiH\to H$ splits as Hopf algebra map.
\end{defn}

\begin{lem} \label{lem-pin-cover} Let $(\tiH,\pi,\gamma)$ be the pin cover of $(H,V)$. If the pin cover of the group $G=G(H)$ splits, then $\pi:\tiH\to H$ splits as Hopf algebra map. In particular, this is the case if $H$ is the universal enveloping algebra of a Lie algebra.

If $\pi$ splits as Hopf algebra map, then $H'\cong H$ as algebras.
\end{lem}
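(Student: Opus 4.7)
I handle the two assertions separately: first lift a group-level splitting to a Hopf algebra splitting of $\pi$, and then use any such splitting to identify $H'$ with $H$.

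For the first assertion, suppose $s_G\colon G\to\tiG$ is a group homomorphism with $\pi_G\circ s_G=\id_G$. I would define $\iota\colon H=\U(L)\rtimes\FF[G]\to\U(L)\rtimes\FF[\tiG]=\tiH$ (where $L=P(H)$) as the identity on $\U(L)$ combined with $s_G$ on $\FF[G]$. This is well-defined because $\tiG$ acts on $L$ through $\pi_G$, so $s_G(g)$ acts on $L$ in the same way as $g$; it is a Hopf algebra map because it sends primitives to primitives and group-likes to group-likes; and $\pi\circ\iota=\id_H$ holds on the generators, hence on all of $H$. For the special case $H=\U(\gfrak)$, the group $G(H)$ is trivial, and so its pin cover splits trivially.

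For the second assertion, fix any Hopf algebra splitting $\iota\colon H\to\tiH$; its restriction to $G$ yields a group-level splitting, so by the first part $\tiH\cong\iota(H)\tensor\FF[\ZZ_2]$ as algebras. I will show that the composition
\[ \iota'\colon H\overset{\iota}{\longrightarrow}\tiH\twoheadrightarrow H'=\tiH/\ker\DeltaC
\]
is an algebra isomorphism. Setting $\tilde\gamma:=\gamma\circ\iota$ and using $\pi\circ\iota=\id_H$ together with $\iota$ being a coalgebra map, one computes $\DeltaC(\iota(h))=h_{(1)}\tensor\tilde\gamma(h_{(2)})$; injectivity of $\iota'$ reduces to injectivity of this expression in $h$. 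For this I would construct the linear left inverse
\[ \sigma\colon H\tensor C(V)\to H\tensor C(V),\quad h\tensor c\mapsto h_{(1)}\tensor\tilde\gamma(Sh_{(2)})c,
\]
and verify via coassociativity, multiplicativity of $\tilde\gamma$, and the antipode identity $Sh_{(1)}\cdot h_{(2)}=\eps(h)\cdot 1$ that $\sigma(\DeltaC(\iota(h)))=h\tensor 1$.

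For surjectivity, I would use the decomposition $\tiH=\iota(H)\oplus\iota(H)\tilde e^-$, where $\tilde e^-$ generates the central $\ZZ_2\subset\tiG$. Remark~\ref{rem-pin-cover} gives $\tilde e+\tilde e^-\in\ker\DeltaC$, so $\tilde e^-\equiv-\iota(1)\pmod{\ker\DeltaC}$ and $\iota(h)\tilde e^-\equiv-\iota(h)\pmod{\ker\DeltaC}$ for every $h$. Hence every element of $\tiH$ is congruent modulo $\ker\DeltaC$ to one in $\iota(H)$, so $\tiH=\iota(H)+\ker\DeltaC$; combined with the injectivity this gives $\tiH=\iota(H)\oplus\ker\DeltaC$, and therefore $\iota'$ is bijective. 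The main obstacle is the injectivity step: since $C(V)$ has no natural counit, $\id\tensor\eps_{C(V)}$ is unavailable as a left inverse, forcing the Sweedler-style construction of $\sigma$ from the antipode of $H$.
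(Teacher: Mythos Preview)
Your proposal is correct and follows essentially the same route as the paper. In particular, your left inverse $\sigma(h\tensor c)=h_{(1)}\tensor\tilde\gamma(Sh_{(2)})c$ is exactly the map the paper uses to show injectivity of $\DeltaC$ on $\iota(H)$, and your surjectivity argument via $\tilde e^-\equiv -\iota(1)$ is the paper's observation that $\DeltaC(\tih+\tih^-)=0$, only spelled out more carefully using the decomposition $\tiH\cong\iota(H)\tensor\FF[\ZZ_2]$.
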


\begin{proof} The first part follows from the construction in \ref{prop-pin-cover} and the observations in \ref{rem-pin-cover}.

To show the second part, we consider $H$ as Hopf subalgebra of $\tiH$, so by restriction we have an algebra map $\gamma:H\to C(V)$. If $\pi\inv(h)=\{\tih,\tih^-\}$ for an element $h\in H$, then $\DeltaC(\tih+\tih^-)=0$. So $H'$ is isomorphic to $H/\ker(\DeltaC|_H)$. Consider $h\in H$ in the kernel of $\DeltaC$. Then $h_{(1)}\tensor\gamma(h_{(2)})=0$. We apply the map $H\tensor C(V)\to H\tensor C(V)$ which sends $h\tensor d\to h_{(1)}\tensor \gamma(Sh_{(2)})d$ for any $h\in H$, $d\in C(V)$. Then 
\[ 0 = h_{(1)}\tensor \gamma(Sh_{(2)} h_{(3)}) = h\tensor 1
\ ,
\]
so $\DeltaC$ is injective on $H$ and $H'\cong H$, as desired.
\end{proof}

We drop the condition of pointedness and consider the special class of modules $V$ of the form $W\oplus W^*$.

\begin{lem} Let $H$ be a cocommutative Hopf algebra and $W$ a finite-dimensional $H$-module. Then there is an $H$-invariant non-degenerate symmetric bilinear form $\tri$ on $V=W\oplus W^*$, i.e. $V$ is an orthogonal $H$-module.
\end{lem}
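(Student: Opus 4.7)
The natural candidate is the hyperbolic form
\[
\tri[(w_1,\xi_1),(w_2,\xi_2)] := \xi_1(w_2) + \xi_2(w_1)
\]
on $V = W\oplus W^*$, where $W^*$ is given the standard $H$-module structure $(h\cdot\xi)(w) = \xi(S(h)\cdot w)$. I would first record that this form is obviously symmetric, and that non-degeneracy is a straightforward evaluation argument: testing against elements of the form $(0,\xi')$ and $(w',0)$ immediately forces the $W$- and $W^*$-components of a vector in the radical to vanish.

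The only substantive point is $H$-invariance, which I would verify in the form of condition~(1) of Lemma~\ref{lem-orthogonal}. Since the bilinear form vanishes on the subspaces $W\times W$ and $W^*\times W^*$, it suffices to check the identity $\tri[h_{(1)}\cdot v, h_{(2)}\cdot u] = \eps(h)\tri[v,u]$ on mixed pairs. For $v = (w,0)$ and $u = (0,\xi)$ one computes
\[
\tri[h_{(1)}\cdot(w,0),\, h_{(2)}\cdot(0,\xi)]
= (h_{(2)}\cdot\xi)(h_{(1)}\cdot w)
= \xi\bigl(S(h_{(2)})h_{(1)}\cdot w\bigr),
\]
and the other mixed case is symmetric.

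The decisive step is to simplify $S(h_{(2)})h_{(1)}$. By cocommutativity of $H$ we may swap the tensor factors, $h_{(1)}\tensor h_{(2)} = h_{(2)}\tensor h_{(1)}$, so $S(h_{(2)})h_{(1)} = S(h_{(1)})h_{(2)}$, which by the antipode axiom equals $\eps(h)\cdot 1$. Substituting this back yields $\eps(h)\xi(w) = \eps(h)\tri[(w,0),(0,\xi)]$, which is exactly what is needed. The use of cocommutativity is essential here: in its absence the above identity fails, and the hyperbolic form is no longer $H$-invariant.

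I expect no real obstacles; the only thing to be a little careful about is the direction in which the antipode is applied when dualising, and the correct application of cocommutativity to reduce $S(h_{(2)})h_{(1)}$ to the counit. The rest of the argument is essentially a bookkeeping exercise in Sweedler notation.
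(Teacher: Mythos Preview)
Your proof is correct and follows essentially the same approach as the paper: both use the hyperbolic form built from the natural pairing $W^*\tensor W\to\FF$ and its $H$-invariance under the contragredient action. The paper's proof is terser (it simply asserts that the pairing is $H$-invariant ``by definition of the contragredient action''), whereas you spell out the Sweedler computation and make explicit where cocommutativity enters via $S(h_{(2)})h_{(1)}=\eps(h)$.
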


\begin{proof} Let $(\cdot,\cdot):W^*\tensor W\to\FF$ be the natural pairing of $W^*$ and $W$. By definition of the contragredient action of $H$ on $W^*$, this pairing is $H$-invariant. We define a non-degenerate symmetric bilinear form $\tri$ on $V$ by $\tri[x+y,x'+y']=(x,y')+(x',y)$. This form is $H$-invariant, as well, so $V$ is an orthogonal $H$-module. 
\end{proof}

\begin{prop} \label{pin-cover-w-w} Let $H$ be a cocommutative Hopf algebra and $W$ a finite-dimensional $H$-module. Consider the orthogonal $H$-module $V=W\oplus W^*$. Then there is an algebra map $\gamma:H\to C(V)$ such that 
\[ h \cdot v = \gamma(h_{(1)}) v \gamma(Sh_{(2)})
\]
in $C(V)$ for all $h\in H$, $v\in V$.

If $H$ is pointed, then all elements of $H$ are even with respect to the $\ZZ_2$-gradation introduced in \ref{def-gradation-H} 
\end{prop}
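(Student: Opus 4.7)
The plan is to construct $\gamma$ directly from the spin representation of $C(V)$ on the exterior algebra $\Lambda W$, bypassing the general pin-cover construction of the previous section. For $V = W \oplus W^*$ with the hyperbolic form of the preceding lemma, $C(V)$ is isomorphic to $\End(\Lambda W)$: a vector $w \in W$ acts by left exterior multiplication $w\wedge-$, while $w^* \in W^*$ acts by $2\,\iota_{w^*}$, twice the usual contraction. Using the superderivation property of $\iota$, one verifies the Clifford relations directly, and a dimension count (both sides having dimension $2^{2\dim W}$) confirms the isomorphism.

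First I would observe that cocommutativity of $H$ guarantees the diagonal $H$-action on $T(W)$ descends to $\Lambda W$: the image in $\Lambda W$ of
\[
 h\cdot(w\tensor w' + w'\tensor w)
 \;=\; h_{(1)} w\tensor h_{(2)} w' + h_{(1)} w'\tensor h_{(2)} w
\]
vanishes after applying cocommutativity $h_{(1)}\tensor h_{(2)} = h_{(2)}\tensor h_{(1)}$ together with antisymmetry of the wedge. This makes $\Lambda W$ into an $H$-module and yields an algebra map $\gamma: H\to \End(\Lambda W)\cong C(V)$. Next I would verify the conjugation formula $h\cdot v = \gamma(h_{(1)}) v\,\gamma(Sh_{(2)})$ in $\End(\Lambda W)$ for $v = w \in W$ and $v = w^* \in W^*$ separately. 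For $v = w$, evaluating on $\xi\in\Lambda W$ and using that $\Lambda W$ is an $H$-module plus the antipode axiom gives
\[
 \gamma(h_{(1)})\bigl(w\wedge\gamma(Sh_{(2)})\xi\bigr)
 = (h_{(1)}\cdot w)\wedge(h_{(2)} Sh_{(3)}\cdot\xi)
 = (h\cdot w)\wedge\xi.
\]
For $v = w^*$ the same pattern works once one knows the contraction pairing $W^*\tensor\Lambda W\to\Lambda W$ is $H$-equivariant, i.e.\ $h\cdot\iota_{w^*}(\xi) = \iota_{h_{(1)}\cdot w^*}(h_{(2)}\cdot\xi)$; this is a direct consequence of $H$-invariance of the canonical evaluation $W^*\tensor W\to\FF$ used to define the contragredient action.

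Finally, when $H$ is pointed, the structure theorem recalled in the preceding section writes $H = \U(P(H))\rtimes\FF[G(H)]$, so the parity needs only be checked on primitives and group-likes. Primitives are even by \ref{def-gradation-H}. A group-like $g$ acts on $V = W\oplus W^*$ block-diagonally as $g|_W\oplus (g|_W)^{-T}$, hence $\det g|_V = \det(g|_W)\cdot\det(g|_W)^{-1} = 1$ and $|g| = 0$. Thus every element of $H$ is even, i.e.\ $H = H\ieven$.

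The only real technical bookkeeping is in the $v \in W^*$ case of the conjugation formula, where one must commute $H$ past the contraction operator using the identity above; this is routine but is the only step that needs care. Everything else reduces to standard facts about the spin representation and the Hopf-algebra structure on $\Lambda W$ arising from cocommutativity.
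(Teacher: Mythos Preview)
Your proof is correct and follows essentially the same approach as the paper: construct $\gamma$ via the spin module and verify the conjugation identity by evaluating on that module. The paper realizes the spin module abstractly as $S=C(V)/C(V)W$ (with $W$ maximal isotropic) and obtains the $H$-action on $S$ by descending the $H$-action on $C(V)$ to the quotient; this lets the paper verify the conjugation formula for all $v\in V$ in one pass (its computation $\gamma(h_{(1)})v\gamma(Sh_{(2)})[v_1\cdots v_k]=[(h\cdot v)v_1\cdots v_k]$ does not distinguish $v\in W$ from $v\in W^*$). Your realization via $\Lambda W$ with creation/contraction operators is equivalent but slightly less economical, since you must treat $v\in W$ and $v\in W^*$ separately and invoke the $H$-equivariance of contraction. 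The evenness argument is identical in both.
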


\begin{proof} With respect to $\tri$ as in the lemma, $W\subset V$ is a maximal isotropic subspace. So $S:=C(V)/C(V)W$ is the spin module of $C(V)$ by left-multiplication and $C(V)=\End(S)$ (cp. \cite[3.2.2]{Me}). We observe that the action of $H$ on $T(V)$ induces an action of $H$ on $C(V)$, because $\tri$ is $H$-invariant. Now this action of $H$ on $C(V)$ induces an action of $H$ on $S$, because $W$ is preserved by the action of $H$. Hence there is an algebra map $\gamma:H\to\End(S)=C(V)$.

Consider elements $h\in H$, $v,v_1,\dots,v_k\in V$. Then $\gamma(h_{(1)})v\gamma(Sh_{(2)})$ is an element in $C(V)$, so we can apply it to $[v_1\cdots v_k]\in S=C(V)/C(V)W$:
\begin{align*}
\gamma(h_{(1)}) v\gamma(Sh_{(2)}) [v_1\cdots v_k]
 &= [h_{(1)} \cdot (v (Sh_{(2)} \cdot (v_1\cdots v_k)))]
 = [h_{(1)} \cdot (v (Sh_{(2)} \cdot v_1) \cdots (Sh_{(k+1)} \cdot v_k))] \\
 &= [(h_{(1)} \cdot v) ((h_{(2)} Sh_{(3)}) \cdot v_1) \cdots (h_{(2k)} Sh_{(2k+1)} ) \cdot v_k))]
 = [(h \cdot v) v_1\cdots v_k]
\end{align*} 
(using cocommutativity), as desired.

Now if $H$ is pointed and if a group-like $h\in H$ acts by the matrix $A$ on $W$ in a certain basis, its contragredient action on $W^*$ is described by the matrix $A\inv$ in the dual basis, so the determinant of $h$ acting on $V$ is always $1$, and hence all elements in $H$ are even.
\end{proof}

\begin{rem} We note that this construction yields a (split) pin cover with $\tiH=H\oplus H$ and $H'=H$ even if $H$ is not pointed.
\end{rem}
\section{Dirac cohomology for Hopf-Hecke algebras}
\label{sec-Hopf-Hecke}

\subsection{Hopf-Hecke algebras}
\label{subsec-Hopf-Hecke}
We fix a cocommutative Hopf algebra $H$ with finite-dimensional module $V$.

We are going to define what we mean by Hopf-Hecke algebras. Since $H$ is a cocommutative Hopf algebra acting on $V$, the tensor algebra $T(V)$ is an $H$-module algebra. The semidirect/smash product $T(V)\rtimes H$ is the algebra generated by $T(V)$ and $H$ and the relation
\[ hv = (h_{(1)} \cdot v) h_{(2)}
\]
for all $h\in H, v\in V$.

\begin{defn} Let $\kappa:V\wedge V\to H$ be a linear map. We denote by $I_\kappa$ the ideal of $T(V)\rtimes H$ generated by elements of the form $vw-wv-\kappa(v\wedge w)$ for $v,w\in V$. The $H$-module algebra 
\begin{equation}
A=A_\kappa:=(T(V)\rtimes H)/I_\kappa
\end{equation}
is called \emph{Hopf-Hecke algebra} of $(H,V,\kappa)$ if $V$ is an orthogonal module and if it satisfies the \emph{PBW property}, that is, if it is a \emph{flat deformation} of $S(V)\rtimes H$.
\end{defn}

In other words, let $\grA$ be the associated graded algebra of $A$ with respect to the filtration of the tensor factor $T(V)$. Now $A$ satisfies the PBW property if the natural surjection from $S(V)\rtimes H$ to $\grA$ is an isomorphism.

\begin{rem} This definition is closely related to the definition of continuous Hecke algebras in \cite{EGG}. More precisely, if $G$ is a reductive algebraic group and $\gfrak$ its Lie algebra, then the Hopf algebra $H=\U(\gfrak)\rtimes\FF[G]$ can be viewed as subalgebra of the algebra of algebraic distributions $\O(G)^*$ on $G$. Replacing $H$ with $\O(G)^*$ in the definition above and dropping the requirement of orthogonality for $V$ yields the definition of continuous Hecke algebras in the sense of \cite{EGG}.

A special case of our definition is the situation where $H$ is the group algebra of a finite group $G$. In this context, every module $V$ is orthogonal, and the class of algebras $A_\kappa$ has been studied in \cite{Dr, RS, SW}. 
\end{rem}

\begin{rem} As noted by Apoorva Khare, one could replace the Hopf algebra $H$ in the definition by a cocommutative algebra in the sense of \cite{Kh}, that is an algebra $H$ with a cocommutative coassociative coproduct $\Delta:H\to H\tensor H$ which is an algebra map, but not necessarily with counit and antipode. The orthogonality condition could still be formulated using \ref{lem-orthogonal}, and the PBW property for the resulting algebra $A=A_\kappa$ was studied in \cite{Kh}. 

However, for our purposes the existence of counit and antipode seem crucial, for instance in the proof of \ref{kernel-grd}.
\end{rem}

\subsection{Dirac elements}
We fix a cocommutative Hopf algebra $H$ with orthogonal (in particular, finite-dimensional) module $V$, with a linear map $\kappa:V\wedge V\to H$ such that $A=A_\kappa$ is a Hopf-Hecke algebra.  Since $V$ is fixed, we use the shorthand $C$ for the Clifford algebra $C(V)$. 

Before we focus on the case of a pointed cocommutative Hopf algebra $H$ using the pin cover constructed earlier, we define Dirac elements and Dirac cohomology for general Hopf-Hecke algebras.

\begin{defn} \label{defn-Casimir-Dirac} Let $(v_k)_k$, $(v^k)_k$ be a pair of orthogonal bases of $V$. Then the \emph{Casimir element} $\Omega$ and the \emph{Dirac element} $D$ are defined to be
\begin{equation} \label{eq-Casimir}
\Omega:= \sum_k v_k v^k
\in A
\ ,
\qquad
D:= \sum_k v_k \tensor v^k
\in A\tensor C
\ . 
\end{equation}
\end{defn}

\begin{lem} The Casimir and the Dirac element are independent of the choice of dual bases, and \label{square-of-D} 
\[ 
 D^2 = \Omega\tensor1
   + \tfrac12 \sum_{k<l} \kappa(v_k,v_l) \tensor [v^k, v^l]
   \ ,
\]
where the commutator is taken in $C$.
\end{lem}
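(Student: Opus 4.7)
My plan is to prove both claims by direct computation in $A$ and $C = C(V)$.

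For the independence of the choice of dual bases, I would observe that $D = \sum_k v_k \otimes v^k$ is the image in $A \otimes C$ of the canonical element of $V \otimes V$ which, under the isomorphism $V \otimes V \cong \End(V)$ induced by the non-degenerate form $\tri$, corresponds to $\id_V$. Since $\id_V$ is basis-free, so is this canonical element, and hence also $D$ and, by further applying multiplication $V \otimes V \to A$, the Casimir $\Omega$. Alternatively, one can give a short change-of-basis computation: for a second dual pair $(w_k), (w^k)$ with $w_k = \sum_i a_{ki} v_i$, the dual basis condition forces the expansion matrix of $w^k$ in terms of $(v^j)$ to be the inverse transpose of $(a_{ki})$, and the sum $\sum_k w_k \otimes w^k$ collapses to $\sum_i v_i \otimes v^i$.

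For the formula for $D^2$, I would expand
\[
D^2 = \sum_{k,l} v_k v_l \otimes v^k v^l,
\]
and symmetrize over the swap $(k,l) \leftrightarrow (l,k)$ to get
\[
2 D^2 = \sum_{k,l} \bigl(v_k v_l \otimes v^k v^l + v_l v_k \otimes v^l v^k\bigr).
\]
I would then rewrite using the defining relations: in $A$, one has $v_l v_k = v_k v_l - \kappa(v_k, v_l)$ with $\kappa$ skew, and in $C$, $v^l v^k = -v^k v^l + 2 \tri[v^k, v^l]$. Expanding and grouping yields two kinds of contributions. The antisymmetric-antisymmetric contribution is of the form $\kappa(v_k, v_l) \otimes [v^k, v^l]$; since both tensor factors change sign under $k \leftrightarrow l$, this is symmetric in $(k, l)$, the diagonal $k = l$ terms vanish by antisymmetry, and the total reduces to $\sum_{k < l} \kappa(v_k, v_l) \otimes [v^k, v^l]$ with the appropriate coefficient. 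The remaining contributions carry the scalar $\tri[v^k, v^l]$; applying the dual-basis identity $\sum_l \tri[v^k, v^l] v_l = v^k$ collapses these into $\Omega \otimes 1$ with the correct multiplicity.

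The main point requiring attention is bookkeeping of prefactors, so that the coefficient of the $\kappa$-sum comes out to $\tfrac12$ and the coefficient of $\Omega \otimes 1$ comes out to $1$. The only substantive input beyond expansion of the defining relations is the dual-basis identity $\sum_l \tri[v^k, v^l] v_l = v^k$, which is also what underlies the basis-independence of $\Omega$. I do not anticipate any real conceptual obstacle.
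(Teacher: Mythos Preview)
Your proposal is correct and follows essentially the same approach as the paper: both arguments identify $D$ and $\Omega$ with images of the canonical tensor corresponding to $\id_V$, and both compute $D^2$ directly from the Clifford relation in $C$, the relation $[v_k,v_l]=\kappa(v_k,v_l)$ in $A$, and the dual-basis identity. The only cosmetic difference is that the paper rewrites the $C$-factor first via $v^kv^l=\tfrac12(v^kv^l+2\tri[v^k,v^l]-v^lv^k)$ and then splits, whereas you symmetrize the double sum over $(k,l)\leftrightarrow(l,k)$ before applying both relations; the bookkeeping comes out the same either way.
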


\begin{proof} We consider the identity map on $V$ and we identify $V\simeq V^*$ using $\tri$, then $\Omega$ is the element in $V\tensor V$ viewed as element in $A$ corresponding to the identity map under the identification $\End(V)\simeq V\tensor V^*\simeq V\tensor V$ for any choice of dual bases. 

Since $D$ is just the image of $\id_V$ in $\AC$, it is independent of the choice of dual bases, as well. 

To verify the formula for $D^2$, we compute
\begin{align*}
D^2 
 &= \sum_{k,l} v_k v_l \tensor v^k v^l
  = \tfrac12 \sum_{k,l} v_k v_l \tensor (v^k v^l + 2\tri[v^k,v^l] - v^l v^k) \\
 &= \sum_{k,l} v_k v_l \tensor \tri[v^k,v^l]
  + \tfrac12 \sum_{k,l} [v_k,v_l] \tensor v^k v^l
  = \sum_k v_k v^k \tensor 1
  + \tfrac12 \sum_{k<l} [v_k,v_l] \tensor [v^k, v^l] 
\\
 &= \Omega\tensor1
   + \tfrac12 \sum_{k<l} \kappa(v_k,v_l) \tensor [v^k, v^l]
\ . 
\end{align*}
\end{proof}

Also for a general Hopf-Hecke algebra, we can define the concept of Dirac cohomology. Let $S$ be a fixed spin module of $C$. Let $M$ be an $A$-module. Then $M\tensor S$ is an $A\tensor C$-module, and in particular the Dirac operator acts on $M\tensor S$. 
\begin{defn} \label{defn-Dirac-cohomology} We define the \emph{Dirac cohomology of $M$} as
\[ H^D(M):=\ker D / (\im D\cap\ker D)
\ . 
\]
\end{defn}

From now on let us assume $H$ is pointed (which is guaranteed over $\FF=\CC$), so there is the pin cover $(\tiH,\pi,\gamma)$ of $(H,V)$.

\begin{defn}
We define an action of $\tiH$ on $\AC$ by 
\[ \tih\cdot x 
 := \DeltaC(\tih_{(1)}) x \DeltaC(S\tih_{(2)})
\] 
for all $\tih\in\tiH, x\in\AC$.
\end{defn}

\begin{lem} The above indeed defines an action of $\tiH$ on $\AC$ which makes $\AC$ a $\tiH$-module algebra.
\label{DeltaC-tiH-linear} With respect to the adjoint action of $\tiH$ on itself and the defined action, $\DeltaC$ is $\tiH$-linear and for all $x\in\AC,\tih\in\tiH$: \label{commutation-DeltaC} 
\[ \DeltaC(\tih) x = (\tih_{(1)}\cdot x) \DeltaC(\tih_{(2)})
\ . 
\] 
\end{lem}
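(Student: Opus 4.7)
The plan is to observe that the entire lemma is a formal consequence of three facts already in hand: $\DeltaC:\tiH\to\AC$ is an algebra map (Remark~\ref{rem-pin-cover}), $\tiH$ is a Hopf algebra with antipode $S$ satisfying the usual axioms, and the coproduct on $\tiH$ is an algebra map. Given these, what is being defined is simply the standard ``adjoint-type'' action of a Hopf algebra on any algebra that receives an algebra homomorphism from it. I would therefore proceed by checking each claim via a direct Sweedler-calculus computation.

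First I would verify that the formula defines an action, i.e. $(\tih\tig)\cdot x=\tih\cdot(\tig\cdot x)$ and $1\cdot x=x$. Expanding the left-hand side, $(\tih\tig)\cdot x=\DeltaC((\tih\tig)_{(1)})\,x\,\DeltaC(S(\tih\tig)_{(2)})$; using that $\Delta$ is an algebra map and $S$ an anti-algebra map, this equals $\DeltaC(\tih_{(1)})\DeltaC(\tig_{(1)})\,x\,\DeltaC(S\tig_{(2)})\DeltaC(S\tih_{(2)})$, which is precisely $\tih\cdot(\tig\cdot x)$ after recognising the inner factor. The unit axiom is immediate from $\DeltaC(1)=1$.

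Next I would verify the module-algebra axioms. For $\tih\cdot 1=\eps(\tih)1$, apply $\DeltaC$ to the antipode identity $\tih_{(1)}S\tih_{(2)}=\eps(\tih)1$. For the Leibniz rule $\tih\cdot(xy)=(\tih_{(1)}\cdot x)(\tih_{(2)}\cdot y)$, I insert a resolution of the identity between $x$ and $y$:
\[
\tih\cdot(xy)=\DeltaC(\tih_{(1)})x\,\eps(\tih_{(2)})\,y\,\DeltaC(S\tih_{(3)})
=\DeltaC(\tih_{(1)})x\DeltaC(S\tih_{(2)})\DeltaC(\tih_{(3)})y\DeltaC(S\tih_{(4)}),
\]
using $S\tih_{(2)}\tih_{(3)}=\eps(\tih_{(2)})1$ and the fact that $\DeltaC$ is multiplicative. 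The right-hand side is $(\tih_{(1)}\cdot x)(\tih_{(2)}\cdot y)$.

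For $\tiH$-linearity of $\DeltaC$ with respect to the adjoint action on $\tiH$, simply compute $\DeltaC(\tih\cdot\tig)=\DeltaC(\tih_{(1)}\tig S\tih_{(2)})=\DeltaC(\tih_{(1)})\DeltaC(\tig)\DeltaC(S\tih_{(2)})=\tih\cdot\DeltaC(\tig)$. Finally, for the commutation formula, expand
\[
(\tih_{(1)}\cdot x)\DeltaC(\tih_{(2)})=\DeltaC(\tih_{(1)})\,x\,\DeltaC(S\tih_{(2)})\DeltaC(\tih_{(3)})=\DeltaC(\tih_{(1)})\,x\,\eps(\tih_{(2)})1=\DeltaC(\tih)\,x,
\]
again via the antipode axiom and multiplicativity of $\DeltaC$. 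There is no real obstacle here; the entire lemma is a routine diagram chase, and the only thing to be careful about is bookkeeping of Sweedler indices and the direction in which $S$ reverses products. The crucial structural input, that $\DeltaC=(\pi\tensor\gamma)\circ\Delta$ is multiplicative, rests on $\pi$ and $\gamma$ being algebra maps, which was established in Proposition~\ref{prop-pin-cover}.
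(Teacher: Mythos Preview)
Your proof is correct and follows essentially the same approach as the paper's own proof: both recognize the action as the standard adjoint-type action through the algebra map $\DeltaC$, and verify each claim by direct Sweedler calculus using multiplicativity of $\DeltaC$ and the Hopf algebra axioms for $\tiH$. You are slightly more thorough in explicitly checking the associativity of the action and the unit axiom $\tih\cdot 1=\eps(\tih)1$, while the paper leaves these implicit; otherwise the computations are line-for-line the same.
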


\begin{proof} The map defines an action, because $\DeltaC$ and the coproduct of $\tiH$ are algebra maps and the antipode $S$ of $\tiH$ is an anti-algebra map. It yields a module algebra, because for all $\tih\in\tiH$ and for all $x,x'\in\AC$:
\[ \DeltaC(\tih_{(1)})xy\DeltaC(S\tih_{(2)})
 = \DeltaC(\tih_{(1)})x\DeltaC(S\tih_{(2)})
 	\DeltaC(\tih_{(3)})y\DeltaC(S\tih_{(4)})
\ . 
\] 

To see that $\DeltaC$ is $\tiH$-linear, note that for all $\tih,h'\in\tiH$:
\[ \DeltaC(\tih_{(1)} h' S\tih_{(2)})
 = \DeltaC(\tih_{(1)}) \DeltaC(h') \DeltaC(S\tih_{(2)})
 = \tih\cdot \DeltaC(h')
\ ,
\]
because $\pi$ and $c$ are algebra maps, where the order of indices in Sweedler's notation is arbitrary, because $\tiH$ is cocommutative.

For the last assertion, we compute
\[ \DeltaC(\tih) x 
 = \DeltaC(\tih_{(1)}) x \DeltaC(S\tih_{(2)}) \DeltaC(\tih_{(3)})
 = (\tih_{(1)}\cdot x) \DeltaC(\tih_{(2)})
\ . 
\]
\end{proof}

\begin{lem} \label{commutation-D-tih}  \label{commutation-D-rho} For all $\ZZ_2$-homogeneous $\tih\in\tiH$:
\[ \tih\cdot D=(-1)^{|\tih|} \eps(\tih) D 
\qquad\textnormal{and}\qquad
\DeltaC(\tih)D = (-1)^{|\tih|} D\DeltaC(\tih)
\ . 
\] 
\end{lem}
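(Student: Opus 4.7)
The plan is to prove the first equation by expanding $\tih\cdot D$ in terms of the pin cover data, using cocommutativity of $\tiH$ to rearrange Sweedler indices so that the pin cover equation \eqref{eq-pin-cover} and the orthogonality of $V$ can be applied. The second equation then follows from the first and the commutation relation in \ref{commutation-DeltaC}.

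For the first equation, I will expand
\[ \tih\cdot(v_i\tensor v^i)=\DeltaC(\tih_{(1)})\,(v_i\tensor v^i)\,\DeltaC(S\tih_{(2)})
\]
into a sum over the four Sweedler factors $\tih_{(1)(1)},\tih_{(1)(2)},\tih_{(2)(1)},\tih_{(2)(2)}$ of $\Delta^{(3)}(\tih)$, using $\DeltaC=(\pi\tensor\gamma)\circ\Delta$ and the identity $\Delta\circ S=(S\tensor S)\circ\Delta$ valid in the cocommutative case. Initially the $A$-factor contains $\pi$ applied to the indices $(1)(1)$ and $(2)(1)$, while the $C$-factor contains $\gamma$ applied to $(1)(2)$ and $(2)(2)$. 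By cocommutativity, $\Delta^{(3)}(\tih)\in\tiH^{\tensor4}$ is $S_4$-invariant, so I can swap the second and third tensor factors and regroup to get the $A$-factor $\pi(\tih_{(1)(1)})v_i\pi(S\tih_{(1)(2)})$ and the $C$-factor $\gamma(\tih_{(2)(1)})v^i\gamma(S\tih_{(2)(2)})$, each now involving the two halves of a single coproduct.

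The $A$-factor then equals $\pi(\tih_{(1)})\cdot v_i$ by the identity $h_{(1)} v\,S(h_{(2)})=h\cdot v$ in $A$ (a consequence of the smash-product relation applied to $h=\pi(\tih_{(1)})$), and the $C$-factor equals $(-1)^{|\tih_{(2)}|}\pi(\tih_{(2)})\cdot v^i$ by the pin cover equation \eqref{eq-pin-cover}. Since $H\ieven$ and $H\iodd$ are subcoalgebras, for homogeneous $\tih$ one has $|\tih_{(2)}|=|\tih|$ in each summand, so the sign factors out. Summing over $i$ I then apply the Casimir-type identity
\[ \sum_i h_{(1)}\cdot v_i\tensor h_{(2)}\cdot v^i=\eps(h)\sum_i v_i\tensor v^i\qquad(h\in H),
\]
which expresses the $H$-invariance of the pairing tensor $\sum_i v_i\tensor v^i$ (this tensor corresponds to $\id_V$ under the $H$-equivariant isomorphism $V\tensor V\simeq\End(V)$ induced by $\tri$, the $H$-equivariance being exactly the orthogonality of $V$ in the form of condition (2) of \ref{lem-orthogonal}). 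Applied with $h=\pi(\tih)$, this yields $\tih\cdot D=(-1)^{|\tih|}\eps(\tih)D$.

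For the second equation, I apply \ref{commutation-DeltaC} with $x=D$ to obtain $\DeltaC(\tih)D=\sum(\tih_{(1)}\cdot D)\DeltaC(\tih_{(2)})$, substitute the first equation (using $|\tih_{(1)}|=|\tih|$ in each term by the subcoalgebra property again), and conclude via $\sum\eps(\tih_{(1)})\tih_{(2)}=\tih$. The main obstacle is the four-index Sweedler bookkeeping in the first equation; once the cocommutativity-based index swap is justified and each half of the resulting expression is recognized (one as an adjoint action in $A$, the other as an instance of \eqref{eq-pin-cover}), the rest reduces to the Casimir-type invariance coming from orthogonality.
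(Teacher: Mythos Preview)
Your proposal is correct and follows essentially the same route as the paper's proof: expand $\tih\cdot D$ via $\DeltaC$, use cocommutativity to regroup the four Sweedler factors so that the $A$-side becomes the adjoint action $\pi(\tih_{(1)})\cdot v_i$ and the $C$-side becomes $(-1)^{|\tih|}\pi(\tih_{(2)})\cdot v^i$ via \eqref{eq-pin-cover}, then invoke the $H$-invariance of $\sum_i v_i\tensor v^i$; the second equation follows from the first via \ref{commutation-DeltaC}. The paper's write-up is terser but the argument is the same.
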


\begin{proof} For the first equation, we calculate (invoking \ref{prop-pin-cover})
\[ \tih\cdot D
 = \sum_i \pi(\tih_{(1)}) v_i \pi(S\tih_{(2)}) 
 		\tensor \gamma(\tih_{(3)}) v^i \gamma(S\tih_{(4)})
 = (-1)^{|\tih|}
	 	\sum_i \pi(\tih_{(1)}) \cdot v_i 
 		\tensor \pi(\tih_{(2)}) \cdot v^i
 = (-1)^{|\tih|}\eps(\tih) D
\]
for a homogeneous element $\tih\in\tiH$, because $\sum_i v_i\tensor v^i$ in $V\tensor V$ is $H$-invariant (since $\id_V$ is $H$-linear), and hence also $\tiH$-invariant. We recall that by definition the homogeneous $\ZZ_2$-graded subspaces of $H$, and hence of $\tiH$, are subcoalgebras.

Finally, we use this result to prove the second equation:
\[ D \DeltaC(\tih) 
 = (\tih_{(1)}\cdot D)\DeltaC(\tih_{(2)})
 = (-1)^{|\tih|} D\DeltaC(\tih) 
 \ . 
\]
\end{proof}

\subsection{The map \texorpdfstring{$d$}{d}} In order to investigate the cohomology of the Dirac element acting on suitable modules, we first study the cohomology of certain differentials. Our strategy is to directly generalize the proofs in \cite{HP, BCT, Ci}, and our most important technique are filtrations of algebras and associated graded algebras. 

\begin{lem} \label{graded-derivation} Let $R$ be a $\ZZ_2$-graded algebra and denote by $(-1)^{|x|}\in\{\pm 1\}$ the degree of a homogeneous element $x\in R$. Then for any odd $r\in R$, the map $f:R\to R$ defined on homogeneous elements $x$ by
\[ x\mapsto rx-(-1)^{|x|}xr
\]
is an odd ($\ZZ_2$-graded) derivation, and $f^2=[r^2,\cdot]$.
\end{lem}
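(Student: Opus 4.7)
My plan is to verify the two claims by direct computation on homogeneous elements, then extend by linearity.

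First, I would observe that $f$ is odd: for $x$ homogeneous of degree $|x|$, both summands $rx$ and $(-1)^{|x|}xr$ are homogeneous of degree $|x|+1 = |x|+|r|$, so $f(x)$ is homogeneous of this degree. This confirms $f$ shifts degree by $1$. Next I would check the graded Leibniz rule appropriate for an odd derivation, namely $f(xy) = f(x)y + (-1)^{|x|} x f(y)$, on homogeneous $x,y$. The left-hand side expands as
\[ f(xy) = r(xy) - (-1)^{|x|+|y|}(xy)r, \]
while the right-hand side expands as
\[ (rx - (-1)^{|x|}xr)y + (-1)^{|x|} x(ry - (-1)^{|y|}yr), \]
and the two middle terms $-(-1)^{|x|}xry$ and $+(-1)^{|x|}xry$ cancel, leaving exactly the left-hand side. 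This establishes the first assertion.

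For the second claim, since both $rx$ and $xr$ are homogeneous of degree $|x|+1$, I can apply $f$ termwise:
\[ f(rx) = r^2x - (-1)^{|x|+1}(rx)r = r^2x + (-1)^{|x|}rxr, \]
\[ f(xr) = r(xr) - (-1)^{|x|+1}(xr)r = rxr + (-1)^{|x|}xr^2. \]
Then
\[ f^2(x) = f(rx) - (-1)^{|x|} f(xr) = r^2x + (-1)^{|x|}rxr - (-1)^{|x|}rxr - xr^2 = [r^2,x], \]
as claimed.

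There is no real obstacle here; the statement is a standard fact about graded (super)commutators and the only thing to be careful about is tracking signs consistently and remembering that $r$ being odd makes $rx$ and $xr$ homogeneous of the same degree $|x|+1$, which is what allows $f$ to be applied to them unambiguously in the computation of $f^2$.
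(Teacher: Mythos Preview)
Your proof is correct and follows essentially the same approach as the paper: both verify the graded Leibniz rule by direct expansion (adding and subtracting the cross term $(-1)^{|x|}xry$), and both compute $f^2$ directly using that $f(x)$ has degree $|x|+1$. The only difference is presentational --- you split the $f^2$ computation into $f(rx)$ and $f(xr)$ before combining, while the paper does it in a single line.
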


\begin{proof} For any $x,y\in R$:
\[ f(xy)
 = rxy-(-1)^{|xy|}xyr
 = rxy-(-1)^{|x|}xry+(-1)^{|x|}xry-(-1)^{|x|+|y|}xyr
 = f(x)y+(-1)^{|x|}xf(y)
\ . 
\]

Also, since $|f(x)|\equiv |x|+1$ mod $2$,
\[ f^2(x)
 = rf(x)-(-1)^{|x|+1}f(x)r
 = r^2x-(-1)^{|x|}rxr+(-1)^{|x|}rxr+(-1)^{2|x|+1}xr^2
 = [r^2,x]
\ . 
\]
\end{proof}

\begin{defn} We define $d:\AC\to\AC$ to be the map which sends 
\begin{equation}
x\mapsto Dx-(-1)^{|x|}xD
\end{equation}
for any $x$ in $\AC$ which is homogeneous with respect to the $\ZZ_2$-gradation $\AC$ inherits from the tensor factor $C$.

For any $\tiH$-module $X$ we define $X\itriv$ and $X\idet$ to be the subspace on which $\tih$ acts as $\eps(\tih)$ and as $(-1)^{|\tih|}\eps(\tih)$, respectively, for all $\tih\in\tiH$. For any space $Y$ and any map $f:X\to Y$, we define the restrictions $f\itriv:=f|_{X\itriv}$ and $f\idet:=f|_{X\idet}$.

\label{defn-C1} Furthermore, we say that $D^2$ satisfies condition the \emph{Parthasarathy condition} if
\[ D^2 \in Z(A\tensor C) + \DeltaC(\tiH\ieven)
 \subset \AC
\ . 
\]
If $A$ is a Hopf-Hecke algebra with a Dirac element $D$ satisfying the Parthasarathy condition, we call $A$ a \emph{Barbasch-Sahi algebra}.
\end{defn}

We will see that the Parthasarathy condition is responsible for the fact that suitable restrictions of $d$ are differentials. This parallels the classical situation in \cite{HP}, where a statement on the square of the Dirac operator due to Parthasarathy (\cite[prop.~3.1]{Pa}) is used. Other examples of Barbasch-Sahi algebras are graded affine Hecke algebras (\cite[thm.~3.5]{BCT}) and rational Cherednik algebras at $t=0$ (\cite[4.3, rem.~4.10]{Ci}). In \cite{FS} it is proved that infinitesimal Cherednik algebras of $\GL_n$ are another example.

\begin{prop} \label{kernel-dtriv} 
\begin{enumerate}
\item{ $d$ is an odd derivation such that $d^2=[D^2,\cdot]$.
}

\item{ $d\circ\DeltaC=0$.
}

\item{ $d$ maps $(\AC)\itriv$ to $(\AC)\idet$ and vice versa.
}

\item{ If additionally $D^2$ satisfies the Parthasarathy condition, then $d^2=0$ on both subspaces, so
\begin{equation} \label{eq-ker-dtriv-inclusion}
 \ker d\itriv \supset \im d\idet + \DeltaC(Z(\tiH))
\ . 
\end{equation}
}
\end{enumerate}
\end{prop}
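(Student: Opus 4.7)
The plan is to prove (1)--(4) in sequence, each part leveraging its predecessors. Part (1) is an immediate application of Lemma~\ref{graded-derivation} with $r=D$: the element $D=\sum_i v_i\tensor v^i$ is $\ZZ_2$-odd in $\AC$ since each $v^i\in V\subset C$ is odd and the $\ZZ_2$-grading on $\AC$ is inherited only from the tensor factor $C$; the lemma then supplies both the derivation property and $d^2=[D^2,\cdot]$.

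For (2), the crucial observation is that for $\ZZ_2$-homogeneous $\tih\in\tiH$, the element $\DeltaC(\tih)\in\AC$ is itself $\ZZ_2$-homogeneous of parity $|\tih|$. This follows because $\tiH\ieven$ and $\tiH\iodd$ are subcoalgebras (remark after \ref{def-gradation-H}), whence $\Delta(\tih)\in\tiH^{|\tih|}\tensor\tiH^{|\tih|}$, combined with $\gamma$ preserving the $\ZZ_2$-grading per~\ref{prop-pin-cover}. Then Lemma~\ref{commutation-D-tih} yields $d(\DeltaC(\tih))=D\DeltaC(\tih)-(-1)^{|\tih|}\DeltaC(\tih)D=D\DeltaC(\tih)-D\DeltaC(\tih)=0$, using $(-1)^{2|\tih|}=1$.

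For (3), I would compute $\tih\cdot d(x)$ for homogeneous $\tih$ and homogeneous $x$, using the module algebra structure (Lemma~\ref{DeltaC-tiH-linear}) to expand $\tih\cdot(Dx)=\sum(\tih_{(1)}\cdot D)(\tih_{(2)}\cdot x)$. Applying Lemma~\ref{commutation-D-tih} together with the subcoalgebra property $|\tih_{(1)}|=|\tih|$ reduces this to $(-1)^{|\tih|}D(\tih\cdot x)$, and an analogous computation for $xD$ (using that the $\tiH$-action preserves $\ZZ_2$-parity, so $|\tih\cdot x|=|x|$) yields $\tih\cdot d(x)=(-1)^{|\tih|}d(\tih\cdot x)$; plugging in the triv or det hypothesis on $x$ completes both directions.

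For the vanishing in (4), (1) gives $d^2=[D^2,\cdot]$; the Parthasarathy condition writes $D^2=z+\DeltaC(\tih)$ with $z\in Z(\AC)$ and $\tih\in\tiH\ieven$, so $z$ drops out, and Lemma~\ref{commutation-DeltaC} applied to $x$ in $(\AC)\itriv$ or $(\AC)\idet$ (the $(-1)^{|\tih_{(1)}|}$ factor is trivial because $\tih$ is even, so $|\tih_{(1)}|=0$) gives $\DeltaC(\tih)x=x\DeltaC(\tih)$, hence $d^2(x)=0$. The final inclusion then drops out: by~(3) together with $d^2|_{(\AC)\idet}=0$, one has $\im d\idet\subset\ker d\itriv$; meanwhile $Z(\tiH)$ is trivial under the adjoint action, so the $\tiH$-linearity of $\DeltaC$ from Lemma~\ref{DeltaC-tiH-linear} places $\DeltaC(Z(\tiH))$ in $(\AC)\itriv$, and $d\circ\DeltaC=0$ from~(2) places it in $\ker d\itriv$. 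The main obstacle I anticipate is the careful $\ZZ_2$-parity bookkeeping inside Sweedler sums, in particular confirming that $\DeltaC$ sends homogeneous elements of $\tiH$ to homogeneous elements of $\AC$ of matching parity; once this is pinned down via the subcoalgebra property of $\tiH^{|\tih|}$, the remaining steps are direct chases through the commutation lemmas.
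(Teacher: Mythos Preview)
Your proposal is correct and follows essentially the same route as the paper: part~(1) via Lemma~\ref{graded-derivation}, part~(2) via Lemma~\ref{commutation-D-rho}, part~(3) via the module-algebra expansion combined with Lemma~\ref{commutation-D-tih}, and part~(4) via Lemma~\ref{commutation-DeltaC} to show $[\DeltaC(\tih),x]=0$ on the two subspaces. You supply more detail than the paper on the $\ZZ_2$-parity bookkeeping (in particular that $\DeltaC$ preserves parity via the subcoalgebra property and that the $\tiH$-action preserves parity), and you spell out the final inclusion explicitly, but the underlying argument is the same.
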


\begin{proof} Part (1) follows from \ref{graded-derivation}, because $D$ is an odd element in $\AC$ by definition. 

Part (2) follows from \ref{commutation-D-rho}, since for all $\tih\in\tiH$:
\[ d(\DeltaC(\tih)) 
 = D\DeltaC(\tih) - (-1)^{|\tih|}\DeltaC(\tih) D
 = D\DeltaC(\tih) - D\DeltaC(\tih) = 0
\ . 
\]

Part (3) follows from \ref{commutation-D-tih} and the implication that for all all $\ZZ_2$-homogeneous $\tih\in\tiH$ and all $x\in A\tensor C$,
\[ \tih\cdot(Dx)
 = (\tih_{(1)} \cdot D) (\tih_{(2)}\cdot x)
 = (-1)^{|\tih|} (\eps(\tih_{(1)}) D) (\tih_{(2)} \cdot x)
 = (-1)^{|\tih|} D (\tih\cdot x)
\ . 
\]

For the last part we check that for all $\tih\in\tiH\ieven$ and all $x$ in $(\AC)\itriv$ or $(\AC)\idet$:
\[ [\DeltaC(\tih), x]
 = (\tih_{(1)}\cdot x) \DeltaC(\tih_{(2)})
  - x \DeltaC(\tih)
 = (1 - 1) x \DeltaC(\tih)
 = 0
 \ ,
\]
so $d^2=[D^2,\cdot]=0$ on the subspaces in question. 
\end{proof}

From now on we assume that $D^2$ satisfies the Parthasarathy condition. In the following we will use the graded homomorphism associated to $d$ to refine the statement \eqref{eq-ker-dtriv-inclusion} on the kernel of $d\itriv$.

We recall that $A$ is a quotient of $T(V)\rtimes H$ as algebras, and the latter one has a $\ZZ$-gradation stemming from $T(V)$. Hence $A$ is a filtered algebra and hence $\AC$ is a filtered algebra whose filtered subspaces we denote by $F_i$ for $i\geq-1$ (where $F_{-1}=0$). The associated graded algebra of $A$ is denoted by $\grA$, and we recall that by assumption it is isomorphic to $S(V)\rtimes H$. We denote the homogeneous graded components of $\grA\tensor C$ by $\grA_i\tensor C$ for $i\geq0$. Finally, we have the linear projections $\gamma_i: F_i\to F_i/F_{i+1}=\grA_i\tensor C$ for all $i\geq0$.

The Dirac operator by definition lives in $F_1$, hence $d$ is a filtered endomorphism of $\AC$ of degree $1$ and it induces a graded endomorphism $\grd$ of degree $1$ of $\grA\tensor C$, i.e. $\grd(\grA_i\tensor C)\subset\grA_{i+1}\tensor C$ for all $i$. Explicitly, for any $i\geq 0$ and any element $\grx\in\grA_i\tensor C$ which is homogeneous with degree $|\grx|$ with respect to the $\ZZ_2$-gradation (stemming from $C$), we can pick any $x\in F_i$ such that $\gamma_i(x)=\grx$. Then $|x|=|\grx|$ and 
\[ \grd(\grx)
 = \gamma_{i+1}(Dx-(-1)^{|x|}xD)
 = \gamma_{i+1}(Dx)-(-1)^{|\grx|}\gamma_{i+1}(xD)
 = \grD\grx-(-1)^{|\grx|}\grx\grD
\ ,
\] 
where $\grD=\gamma_1(D)$ is the image of $D$ in $\grA_1\tensor C$.

\begin{lem} \label{grd-graded-derivation} \label{grd-H-linear} $\grd$ is a graded derivation and $\grd^2=0$, so $\grd$ defines a differential on $\grA\tensor C$.
\end{lem}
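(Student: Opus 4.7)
The plan is to apply Lemma \ref{graded-derivation} directly to the odd element $\grD\in\grA_1\tensor C$. Since the formula $\grd(\grx)=\grD\grx-(-1)^{|\grx|}\grx\grD$ has already been established in the text preceding the statement, and $\grD$ is odd with respect to the $\ZZ_2$-gradation on $\grA\tensor C$ inherited from $C$ (because $V\subset C\iodd$), Lemma \ref{graded-derivation} immediately yields both that $\grd$ is an odd graded derivation and that $\grd^2=[\grD^2,\cdot]$. The task therefore reduces to showing $\grD^2\in Z(\grA\tensor C)$.

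For this, I would compute $\grD^2$ using the formula from Lemma \ref{square-of-D}:
\[ D^2=\Omega\tensor 1+\tfrac12\sum_{k<l}\kappa(v_k,v_l)\tensor[v^k,v^l]. \]
The key observation is that $\kappa(v_k,v_l)\in H\subset F_0$, so the second summand lies in $F_0\tensor C$ and is killed by $\gamma_2$, while $\Omega=\sum_k v_k v^k$ is genuinely of filtration degree $2$. Hence $\grD^2=\gamma_2(D^2)=\overline{\Omega}\tensor 1$, where $\overline{\Omega}:=\sum_k v_k v^k$ is understood as an element of $S^2(V)\subset\grA$.

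It then remains to check that $\overline{\Omega}$ lies in the center of $\grA=S(V)\rtimes H$. Commutation with $S(V)$ is automatic since $S(V)$ is commutative. Commutation with $H$ follows from the $H$-invariance of $\sum_k v_k\tensor v^k\in V\tensor V$ (this element corresponds to $\id_V$ under $\End(V)\simeq V\tensor V$ and has already been used in the proof of Lemma \ref{commutation-D-rho}): it forces $h\cdot\overline{\Omega}=\eps(h)\overline{\Omega}$ in $S^2(V)$, whence $h\overline{\Omega}=(h_{(1)}\cdot\overline{\Omega})h_{(2)}=\eps(h_{(1)})\overline{\Omega}h_{(2)}=\overline{\Omega}h$ in the smash product. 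Thus $\grD^2\in Z(\grA)\tensor 1\subset Z(\grA\tensor C)$ and $\grd^2=0$, as required.

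There is no genuine obstacle here: the whole content of the lemma is the observation that the $\kappa$-obstruction in $D^2$ has strictly lower filtration degree than $D^2$ itself, and is therefore invisible to the associated-graded calculation. This reduces centrality of $\grD^2$ to the classical fact that the Casimir element assembled from dual bases is central in $S(V)\rtimes H$, exactly paralleling the strategy used in the graded affine Hecke algebra setting of \cite{BCT} and the Drinfeld Hecke setting of \cite{Ci}.
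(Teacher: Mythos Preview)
Your proof is correct and follows essentially the same route as the paper: both apply Lemma~\ref{graded-derivation} to reduce to showing $\grD^2$ is central, compute $\grD^2=\overline{\Omega}\tensor1$, and then invoke $H$-invariance of $\sum_k v_k\tensor v^k$ for commutation with $H$. The only cosmetic difference is that you obtain $\grD^2$ by applying $\gamma_2$ to the formula for $D^2$ from Lemma~\ref{square-of-D} (so the $\kappa$-terms visibly drop out for filtration reasons), whereas the paper recomputes $\grD^2$ directly in $\grA\tensor C$ and then checks $\grd^2$ on the generators $1\tensor v$, $v\tensor 1$, $h\tensor 1$ rather than phrasing it as centrality of $\overline{\Omega}$.
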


\begin{proof} From the explicit formula and \ref{graded-derivation} we see that $\grd$ is a graded derivation and $\grd^2=[\grD,\cdot]$. Hence, to see that $\grd^2=0$ it only remains to show that there is a set of algebra generators which commute with $\grD$.

Now let $(v_k)_k$ and $(v^k)_k$ be a pair of dual bases of $V$. Then
\[ \grD^2
 = \sum_{k,l} v_k v_l \tensor v^k v^l
 = \tfrac12 \sum_{k,l} v_k v_l \tensor (v^k v^l+v^l v^k)
 = \sum_k v_k v^k \tensor 1
 = \Omega\tensor1
\ ,
\]
because $v_k$ and $v_l$ commute in $\grA$ and $v^k v^l+v^l v^k=2\tri[v^k,v^l]$.
 
Therefore, $\grd^2(1\tensor v)=0$ and also $\grd^2(v\tensor 1)=0$ for all $v\in V$, because $S(V)$ is commutative. We consider $h\in H$. Then $h\tensor 1\in\grA_0\tensor C$ and
\[ \grd^2(h\tensor1)
 = [\Omega, h] \tensor 1
 = (\Omega h - (h_{(1)}\cdot\Omega)h_{(2)}) \tensor 1
 = 0
\]
because $\Omega$ is $H$-invariant. Hence, $\grd^2$ annihilates a set of algebra generators, as desired.
\end{proof}

Next, we want to describe the cohomology of $\grd$.

\begin{lem} \label{ker-dprime} Let $d'$ be the restriction of $\grd$ to $S(V)\tensor C\subset \grA\tensor C$. Then $d'$ preserves $S(V)\tensor C$ and is a multiple of the differential in the Koszul complex. In particular,
\[ \ker d' = \im d' \oplus \FF(1\tensor 1)
\ . 
\]
\end{lem}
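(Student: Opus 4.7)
The plan is to make $d'$ explicit, recognize it as a scalar multiple of the standard Koszul differential under the Chevalley identification of $C$ with $\bigwedge V$ as super vector spaces, and then invoke the classical acyclicity of the Koszul resolution of the trivial $S(V)$-module.

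First I would check preservation: since $S(V)$ is commutative, $v_i f = f v_i$ for $f\in S(V)$, so for $\ZZ_2$-homogeneous $c\in C$,
\[
d'(f\tensor c)
  = \sum_i v_i f\tensor v^i c - (-1)^{|c|}\sum_i f v_i\tensor c v^i
  = \sum_i v_i f\tensor\bigl(v^i c - (-1)^{|c|} c v^i\bigr),
\]
which clearly lies in $S(V)\tensor C$. Next I would apply \ref{graded-derivation} to $C$ with $r=v^i$: the inner map $c\mapsto v^i c - (-1)^{|c|}c v^i$ is an odd derivation of $C$, and on a generator $w\in V$ it evaluates to $v^i w + w v^i = 2\tri[v^i,w]$. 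Expanding on a Clifford monomial $c = w_1\cdots w_k$ via the derivation property and summing over $i$ using $\sum_i \tri[v^i,w_j]\,v_i = w_j$ yields
\[
d'(f\tensor w_1\cdots w_k)
  = 2\sum_{j=1}^k (-1)^{j-1}\, w_j f \tensor w_1\cdots\widehat{w_j}\cdots w_k,
\]
which, identifying each Clifford monomial built from an orthogonal basis with the corresponding wedge monomial, is exactly twice the standard Koszul differential on $S(V)\tensor\bigwedge V$.

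Finally, the Koszul complex is a free resolution of $\FF = S(V)/(V)$ concentrated in wedge-degree $0$, so its cohomology equals $\FF\cdot(1\tensor 1)$; the scalar factor of $2$ is harmless in characteristic $0$. This yields $\ker d' = \im d' \oplus \FF(1\tensor 1)$. The main obstacle is just the bookkeeping required to identify $d'$ with the Koszul differential (care is needed with signs and with the fact that the Chevalley map is only an isomorphism of super vector spaces, not of algebras); once that identification is in place no deeper input beyond \ref{graded-derivation} and the standard Koszul resolution is required.
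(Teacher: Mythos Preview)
Your argument is correct and follows the same strategy as the paper: identify $d'$ with a scalar multiple of the Koszul differential and then invoke the acyclicity of the Koszul complex. The paper's execution is slightly terser---it checks the identification only on generators $v\tensor w$ for $v,w\in V$ and appeals to the fact that both $d'$ and the Koszul differential are graded derivations---whereas you carry out the explicit expansion on arbitrary Clifford monomials via \ref{graded-derivation}; but the underlying idea is the same.
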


\begin{proof} Again let $(v_i)_i$, $(v^i)_i$ be a pair of dual bases of $V$. We consider the element $v,w\in V$ and the element $v\tensor w$ in $S(V)\tensor C$. Then 
\[ \grd(v\tensor w)
 = \sum_i v_i v \tensor v^i w 
   - (-1)^{1} v v_i \tensor w v^i
 = \sum_k v v_i \tensor (v^i w + w v^i)
 = 2 v w \tensor 1
\ ,
\]
which coincides with the Koszul differential up to a constant. Since elements of the form $v\tensor w$ generate $S(V)\tensor C$ as algebra and since both maps in questions are graded derivations, they coincide.
\end{proof}

In the following we will view the map $\DeltaC:\tiH\to H\tensor C\subset F_0$ as a map $\tiH\to\grA_0\tensor C$ by identifying $F_0$ with $\grA_0\tensor C$.

\begin{prop} \label{kernel-grd} The spaces $\im\grd$ and $\DeltaC(\tiH)$ have trivial intersection and we have
\[ \ker\grd = \im\grd \oplus \DeltaC(\tiH)
\ . 
\]
\end{prop}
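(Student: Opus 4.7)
The plan is to prove directness of the sum and the reverse inclusion $\ker\grd \subseteq \im\grd + \DeltaC(\tiH)$ separately, decomposing along the $S(V)$-grading of $\grA\tensor C$.

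For directness I would use that $\grD \in \grA_1\tensor C$, so $\grd$ raises $S(V)$-degree by exactly one; hence $\im\grd \subseteq \bigoplus_{i\geq 1}\grA_i\tensor C$ while $\DeltaC(\tiH)\subseteq\grA_0\tensor C = H\tensor C$, and the intersection is trivially zero. That $\im\grd\subseteq\ker\grd$ is \ref{grd-graded-derivation}, and I would verify $\DeltaC(\tiH)\subseteq\ker\grd$ by the graded analog of \ref{commutation-D-rho}, i.e.\ $\grD\,\DeltaC(\tih) = (-1)^{|\tih|}\DeltaC(\tih)\,\grD$, proved by the identical argument carried out in the associated graded algebra.

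For the reverse inclusion I would decompose $x\in\ker\grd$ as $x=\sum_i x_i$ with $x_i\in\grA_i\tensor C$ and treat each $S(V)$-homogeneous component. For $i\geq 1$ my approach is to exploit the $S(V)$-linearity of $\grd$ --- which holds because $\grd(s\tensor 1) = [\grD, s\tensor 1] = 0$ for $s\in S(V)$ (as $S(V)$ is commutative in $\grA$), combined with the Leibniz rule --- together with the Koszul acyclicity of $d'$ from \ref{ker-dprime}. Under the PBW identification $\grA\tensor C \cong S(V)\tensor(H\tensor C)$, $(\grA\tensor C,\grd)$ becomes a Koszul-type complex with coefficients in $H\tensor C$ whose restriction to $S(V)\tensor C$ is acyclic in positive degree, and the coefficient-linearity lifts this acyclicity to the full complex, giving $x_i\in\im\grd$. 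For $i=0$ I would identify $\ker\grd\cap(H\tensor C) = \DeltaC(\tiH)$ directly: using the identity $\sum_k(h\cdot v_k)\tensor v^k = \sum_k v_k\tensor(Sh\cdot v^k)$ (a consequence of $H$-invariance of $\sum_k v_k\tensor v^k$ and the orthogonality condition for the antipode), the equation $\grd(y)=0$ for $y\in H\tensor C$ is equivalent to the super-commutation $\grD y = (-1)^{|y|}y\,\grD$, reducing the problem to computing the $\ZZ_2$-graded commutant of $\grD$ inside $H\tensor C$.

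The hard part will be the $\subseteq$ direction of this degree-zero identification: showing that every element of $H\tensor C$ which super-commutes with $\grD$ actually lies in $\DeltaC(\tiH)$. I expect this to follow from the defining equation \eqref{eq-pin-cover} of the pin cover together with the explicit structure $\tiH = \U(L)\rtimes\FF[\tiG]$, by matching the commutation condition against the construction of $\tiH$ case by case on group-likes and primitives; reconstructing an element of $\tiH$ from an abstract super-commutant element in $H\tensor C$ is the step I expect to require the most care, and where the essential use of the counit and antipode of $H$ (alluded to earlier in the paper) will enter.
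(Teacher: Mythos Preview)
Your directness argument and the inclusion $\im\grd\oplus\DeltaC(\tiH)\subseteq\ker\grd$ are fine and match the paper. The gap is in your reverse inclusion.

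You claim that under the PBW identification $\grA\tensor C\cong S(V)\tensor(H\tensor C)$ the map $\grd$ is ``coefficient-linear'' and reduces to $d'$ with coefficients. This is false for the naive basis: while $\grd(s\tensor 1)=0$ for $s\in S(V)$ is correct, the elements $h\tensor 1$ with $h\in H$ are \emph{not} $\grd$-closed in general. For a group-like $h$ one has
\[
\grd(h\tensor 1)=\sum_k(v_k-h\cdot v_k)h\tensor v^k,
\]
which is nonzero whenever $h$ acts nontrivially on $V$. So writing $x=\sum_k r_k(h^k\tensor 1)$ with $r_k\in S(V)\tensor C$ and applying Leibniz does \emph{not} give $\grd(x)=\sum_k\grd(r_k)(h^k\tensor 1)$, and the Koszul acyclicity of $d'$ cannot be lifted as you suggest. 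For the same reason your separate degree-zero step, identifying $\ker\grd\cap(H\tensor C)$ with $\DeltaC(\tiH)$ by a case analysis on generators, is both necessary in your scheme and hard: the super-commutation condition is linear in the element of $H\tensor C$, not multiplicative, so checking group-likes and primitives does not settle it for general elements.

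The paper's key idea is precisely to repair the failed linearity by a change of basis. Using a coalgebra section $\iota:H\to\tiH$ of $\pi$, one has $f(h\tensor 1):=h_{(1)}\tensor\gamma(\iota(h_{(2)}))=\DeltaC(\iota(h))$, and these elements \emph{are} $\grd$-closed. One still has $\grA\tensor C=(S(V)\tensor C)\cdot f(H\tensor 1)$, so every $x$ can be written as $\sum_k r_k f(h^k\tensor 1)$ with $r_k\in S(V)\tensor C$ and $(h^k)_k$ linearly independent; now Leibniz genuinely gives $\grd(x)=\sum_k\grd(r_k)f(h^k\tensor 1)$. The antipode and counit enter through the inverse twist $g(h\tensor d)=h_{(1)}\tensor d\,\gamma(S\iota(h_{(2)}))$, which satisfies $g\circ f(h\tensor 1)=h\tensor 1$ and lets one recover $\grd(r_k)=0$ from linear independence of the $h^k$. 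Then $r_k\in\ker d'=\im d'\oplus\FF$ by \ref{ker-dprime}, and since the scalar part contributes $\DeltaC(\iota(h^k))\in\DeltaC(\tiH)$ while the $\im d'$ part contributes to $\im\grd$, one obtains $x\in\im\grd\oplus\DeltaC(\tiH)$ directly. In particular, no separate degree-zero analysis is needed: the identification $\ker\grd\cap(H\tensor C)=\DeltaC(\tiH)$ falls out for free.
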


\begin{proof} Since $\grd$ is a graded endomorphism with degree $1$, $\im\grd$ is contained in $\bigoplus_{i\geq1} \grA_i\tensor C$. However, $\DeltaC(\tiH)$ is contained in $\grA_0\tensor C$, so their intersection is trivial and the sum on the right-hand side is direct.

We have seen that $\grd^2=0$ and \ref{kernel-dtriv} implies that $\DeltaC(\tiH)$ is contained in $\ker\grd$, so the right-hand side of the identity is contained in the left-hand side. It remains to show the opposite inclusion.

We have seen that $\pi$ splits as coalgebra map, so we can pick a coalgebra map $\iota:H\to\tiH$ such that $\pi\circ\iota=\id_H$. Now consider the maps $f,g:H\tensor C\to H\tensor C$ defined by
\[
 f(h\tensor d) = h_{(1)} \tensor \gamma(\iota(h_{(2)})) d
 \ ,\qquad
 g(h\tensor d) = h_{(1)} \tensor d \gamma(S\iota(h_{(2)}))
\] 
for all $h\in H, d\in C$. We verify that
\[ g\circ f(h\tensor 1) 
	= h_{(1)}\tensor \gamma(\iota(h_{(2)})S\iota(h_{(3)}))
	= h_{(1)}\tensor \gamma(\iota(h_{(2)})_{(1)} S\iota(h_{(2)})_{(2)})
	= h_{(1)}\tensor \eps(\iota(h_{(2)}))	
	= h\tensor 1
\ . 
\] 
Note that due to cocommutativity, we do not have to worry about the order of indices in Sweedler's notation.

Now we observe that $\grA\tensor C=(S(V)\tensor C)(H\tensor 1)=(S(V)\tensor C)f(H\tensor 1)$, because $h\tensor 1=(1\tensor \gamma(S\iota(h_{(1)}))f(h_{(2)}\tensor 1)$ for all $h\in H$. So for any $x\in\ker\grd$ we can find $(h^k)_k$ from $H$ and $r_k\in S(V)\tensor C$ such that $x=\sum r_k f(h^k\tensor1)$, and we can pick the $(h^k)_k$ to be linearly independent. Now note that for all $h\in H$,
\[ f(h\tensor1)
 = h_{(1)}\tensor \gamma(\iota(h_{(2)}))
 = \pi(\iota(h_{(1)})) \tensor \gamma(\iota(h_{(2)}))
 = \pi(\iota(h)_{(1)}) \tensor \gamma(\iota(h)_{(2)}) 
 = \DeltaC(\iota(h))
 \ ,
\]
so $f(H\tensor1)\subset\DeltaC(\tiH)$, and in particular, $\grd$ is zero on $f(H\tensor1)$. Hence
\[ 0 = \grd(x)
 = \sum_k \grd(r_k) f(h^k\tensor1) 
\ . 
\]
But this implies 
\[ 0 = g(\grd(x))
 = \sum_k \grd(r_k) g\circ f(h^k\tensor1) 
 = \sum_k \grd(r_k) (h^k\tensor1) 
\ ,
\]
where we have exploited that $g$ is $C$-linear with respect to multiplication from the left.
Now the $(h^k)_k$ were chosen to be linearly independent, so the last equation implies $\grd(r_k)=0$ for all $k$, or $r_k\in\ker d'$, equivalently. Hence by \ref{ker-dprime}, $r_k=p_k+ \grd(q_k)$ for $p_k\in\FF$ and $q_k\in S(V)\tensor C$ . Thus
\[ x = \sum_k (p_k + \grd(q_k)) f(h^k\tensor 1)
 = \sum_k (p_k + \grd(q_k)) \DeltaC(\iota(h^k))
 = \sum_k \DeltaC(p_k\iota(h^k))
 	 + \grd(q_k \DeltaC(\iota(h^k))) 
\,
\]
which is in $\DeltaC(\tiH) + \im\grd$, as desired.
\end{proof}

\begin{cor} \label{kernel-grdtriv} The action of $\tiH$ on $\AC$ preserves its filtration, so $\grA\tensor C$ is an $\tiH$-module.

The direct sum $\ker\grd=\im\grd\oplus\DeltaC(\tiH)$ in \ref{kernel-grd} is a direct sum of $H$-submodules.

In particular,
\[ \ker \grd\itriv = \im\grd\idet \oplus \DeltaC(Z(\tiH))
\ . 
\]
\end{cor}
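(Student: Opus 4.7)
The plan is to derive all three assertions from Proposition \ref{kernel-grd} by tracking how the $\tiH$-action interacts with the decomposition there.

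First I would verify that the $\tiH$-action preserves the filtration on $\AC$. Since $\DeltaC(\tiH)\subset H\tensor C\subset F_0$ and $F_0\cdot F_i\cdot F_0\subset F_i$, the action $\tih\cdot x=\DeltaC(\tih_{(1)})\,x\,\DeltaC(S\tih_{(2)})$ sends $F_i$ into $F_i$, so it descends to $\grA\tensor C$.

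Next, I would show that both summands in $\ker\grd=\im\grd\oplus\DeltaC(\tiH)$ are $\tiH$-submodules. The image $\DeltaC(\tiH)$ is stable by the $\tiH$-equivariance of $\DeltaC$ established in Lemma \ref{DeltaC-tiH-linear} (with the adjoint action on $\tiH$). For $\im\grd$, combining Lemma \ref{commutation-D-tih}, the fact that the $\ZZ_2$-homogeneous subspaces of $\tiH$ are subcoalgebras, and that $\AC$ is a $\tiH$-module algebra, I would deduce the twisted equivariance
\[
\tih\cdot\grd(x)=(-1)^{|\tih|}\grd(\tih\cdot x)
\]
for $\ZZ_2$-homogeneous $\tih$, which immediately shows both $\im\grd$ and $\ker\grd$ are $\tiH$-stable.

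For the final identity, I would take the $\itriv$-isotypic component of the decomposition $\ker\grd=\im\grd\oplus\DeltaC(\tiH)$. The twisted equivariance above shows that $\grd$ interchanges the $\itriv$ and $\idet$ components, giving $(\im\grd)\itriv=\im\grd\idet$. On the other summand, the $\tiH$-equivariance of $\DeltaC$ together with the identity $\tiH\itriv=Z(\tiH)$ under the adjoint action yields the inclusion $\DeltaC(Z(\tiH))\subset\DeltaC(\tiH)\itriv$, and the matching reverse inclusion is what closes the argument.

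The main obstacle I anticipate is precisely this last identification $\DeltaC(\tiH)\itriv=\DeltaC(Z(\tiH))$, which amounts to lifting conjugation-invariant elements of $H'=\tiH/\ker\DeltaC$ to genuinely central elements of $\tiH$. I would handle it by a direct argument: given $y\in\ker\grd\itriv$, write $y=\grd(z)+\DeltaC(\tih)$ via Proposition \ref{kernel-grd}, apply the action of an arbitrary $\ZZ_2$-homogeneous $\tih'\in\tiH$ to both sides, and use the uniqueness from the direct-sum decomposition together with the twisted equivariance to deduce
\[
\DeltaC\bigl(\tih'_{(1)}\tih S\tih'_{(2)}-\eps(\tih')\tih\bigr)=0
\]
for all such $\tih'$; the explicit structure $\tiH=\U(L)\rtimes\FF[\tiG]$ of the pin cover together with the description of $\ker\DeltaC$ from Remark \ref{rem-pin-cover} should then allow adjusting $\tih$ inside $\ker\DeltaC$ to obtain a genuinely central representative, completing the identification.
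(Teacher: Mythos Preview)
Your proposal follows the paper's proof closely: verify that the $\tiH$-action preserves the filtration because $\DeltaC(\tiH)\subset F_0$, show both summands in Proposition~\ref{kernel-grd} are $\tiH$-stable via the $\tiH$-linearity of $\DeltaC$ and the twisted equivariance $\tih\cdot\grd(x)=(-1)^{|\tih|}\grd(\tih\cdot x)$, and then take invariants. The paper is in fact terser than you on the step you flag as the main obstacle: it simply writes $\DeltaC(\tiH)^{\tiH}=\DeltaC(Z(\tiH))$, citing only that the center of a Hopf algebra equals its adjoint invariants, and does not address the lifting issue. Your proposed fix via the explicit structure of the pin cover is correct and can be phrased cleanly: the nontrivial central element $z\in\ker\pi_G\subset\tiG$ gives orthogonal central idempotents $e_\pm=(1\pm z)/2$ with $\ker\DeltaC=e_+\tiH$, so $H'\cong e_-\tiH$ is a direct ring factor of $\tiH$ and any element central modulo $\ker\DeltaC$ can be replaced by its $e_-$-component, which is genuinely central.

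One caution: the companion identification $(\im\grd)\itriv=\im\grd\idet$, which both you and the paper treat as immediate from twisted equivariance, has exactly the same character. The twisted equivariance gives the inclusion $\im\grd\idet\subset(\im\grd)\itriv$ at once, but the reverse asks that every $\tiH$-invariant element of $\im\grd$ admit a $\grd$-preimage lying in $(\grA\tensor C)\idet$ --- again a lifting statement, now through $\ker\grd$ rather than through $\ker\DeltaC$, and it does not follow formally from equivariance alone. Since you were careful to isolate this issue for $\DeltaC$, you may want to note that the same scrutiny applies here; the paper does not.
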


\begin{proof} The action of $\tiH$ on $\AC$ indeed preserves the filtration by definition, because the image of $\DeltaC$ is in degree $0$ if the filtration. We take $\tiH$-invariants of the previously found direct sum decomposition of $\ker\grd$. As $\DeltaC$ is $\tiH$-linear, its image is a submodule. We have seen that $D$ lies in $(\AC)\idet$, so for all $x\in F_i$,
\[ \tih\cdot\grd(\grx)
 = \gamma_{i+1}(\tih\cdot d(x))
 = \gamma_{i+1}((-1)^{|\tih|} d(\tih\cdot x))
 = (-1)^{|\tih|} \grd(\tih\cdot\grx)
\]
and in particular, the image of $\grd$ is a submodule of $\grA\tensor C$. Hence the direct sum decomposition is a decomposition as $\tiH$-modules and we get
\[ \ker\grd\itriv
 = (\ker\grd)^\tiH
 = (\im\grd)^\tiH\oplus\DeltaC(\tiH)^\tiH
\ . 
\] 
Using $\tiH$-linearity of $\DeltaC$ and the corresponding formula for $\grd$ again we obtain the desired result, because the center of a Hopf algebra $H$ is just the set of $H$-invariants. 
\end{proof}

\begin{lem} \label{lem-sum-direct} The intersection of the spaces $\im d\idet$ and $F_0$ is $0$.
\end{lem}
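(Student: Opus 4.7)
The plan is to induct on the minimal filtration degree of a preimage under $d$. Take $y\in\im d\idet\cap F_0$, write $y=d(x)$ for some $x\in(\AC)\idet$, and let $n$ be the smallest integer with $x\in F_n$ (where $F_{-1}=0$).

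For the base case $n=0$, identify $F_0\cong\grA_0\tensor C$ and view $x$ as $\grx\in\grA_0\tensor C$. Since $d$ raises filtration degree by one and $y=d(x)\in F_0$, the leading term $\grd(\grx)\in\grA_1\tensor C$ vanishes, so $\grx\in\ker\grd$. But $\im\grd$ is concentrated in filtration degrees $\geq 1$, so Proposition~\ref{kernel-grd} forces $\grx\in\DeltaC(\tiH)$; that is, $x=\DeltaC(\tih)$ for some $\tih\in\tiH$. Then Proposition~\ref{kernel-dtriv}(2) gives $y=d(\DeltaC(\tih))=0$.

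For the inductive step $n\geq 1$, the leading term $\grx:=\gamma_n(x)\in(\grA_n\tensor C)\idet$ again satisfies $\grd(\grx)=0$, so by the degree decomposition in Proposition~\ref{kernel-grd} we have $\grx=\grd(\gry)$ for some $\gry\in\grA_{n-1}\tensor C$. Since $\grx$ is $\idet$ and $\grd$ swaps $\itriv$ and $\idet$ (with a sign twist under $\tiH$, as in the proof of Corollary~\ref{kernel-grdtriv}), I arrange $\gry$ to lie in $(\grA_{n-1}\tensor C)\itriv$. Lifting $\gry$ to some $x'\in F_{n-1}\cap(\AC)\itriv$ and setting $x'':=x-d(x')$, Proposition~\ref{kernel-dtriv}(3) yields $d(x')\in\idet$, so $x''\in F_{n-1}\cap(\AC)\idet$. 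Writing $D^2=z+\DeltaC(\tih_0)$ with $z\in Z(\AC)$ and $\tih_0\in\tiH\ieven$ via the Parthasarathy condition, Lemma~\ref{commutation-DeltaC} combined with the counit identity gives $[\DeltaC(\tih_0),x']=0$ for $x'\in\itriv$, whence $d^2(x')=[D^2,x']=0$ and $d(x'')=y\in F_0$. The induction hypothesis applied to $x''$ then yields $y=0$.

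The hard part is the lifting step $\gry\leadsto x'$: one must ensure that $(\grA_{n-1}\tensor C)\itriv$ lifts into $F_{n-1}\cap(\AC)\itriv$ across the short exact sequence $0\to F_{n-2}\to F_{n-1}\to\grA_{n-1}\tensor C\to 0$ of $\tiH$-modules. Starting from any filtered lift, the failure of $\itriv$-invariance lies in $F_{n-2}$ and can be corrected recursively by the same procedure at lower filtration levels, exploiting the concrete description of the $\tiH$-action by conjugation with $\DeltaC$ landing in $F_0=H\tensor C$, together with the coalgebra splitting $\iota:H\to\tiH$ and the PBW identification $\grA\cong S(V)\rtimes H$ used in the proof of Proposition~\ref{kernel-grd}.
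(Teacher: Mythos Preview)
Your argument follows the paper's strategy closely: both use a filtration-degree induction/minimality argument, invoke the kernel description of $\grd$ (Proposition~\ref{kernel-grd} and Corollary~\ref{kernel-grdtriv}) to strip off the top graded piece, use the vanishing $d\circ\DeltaC=0$ for the base case, and apply the Parthasarathy-based $d^2|_{\itriv}=0$ to show that the lower-degree remainder has the same $d$-image.

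The one substantive difference is your treatment of the lifting step. You single this out as ``the hard part'' and propose a recursive correction of a filtered lift, but that sketch is not clearly well-defined: the obstruction to upgrading an arbitrary filtered lift $x'_0$ of $\gry$ to an element of $(\AC)\itriv$ is a family of elements $\tih\cdot x'_0-\eps(\tih)x'_0\in F_{n-2}$ indexed by all $\tih\in\tiH$, and finding a single corrector $x'_1\in F_{n-2}$ with the right transformation behaviour is a cohomological problem that does not obviously unwind by iteration without some semisimplicity assumption. The paper avoids this entirely: the symmetrization map $\eta:\grA_{n-1}\tensor C\to F_{n-1}$ (constructed explicitly in the proof of Proposition~\ref{homology-of-d}, using cocommutativity of $\tiH$) is a $\tiH$-linear section of $\gamma_{n-1}$, so $x':=\eta(\gry)\in F_{n-1}\cap(\AC)\itriv$ in a single step. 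With that replacement your proof is exactly the paper's, reorganized as an explicit induction rather than a minimality argument.
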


\begin{proof} 
This follows from our description of the kernel of the graded map $\grd$: Assume $x=d(a)$ for some $a\in(\AC)\idet$ and at the same time $x\in F_0$. We have to show that $x=0$. If $a=0$ then we are done, so we assume $a\neq0$. Let $i\geq0$ be the degree of $a$ with respect to the filtration, i.e. $a\in F_i\setminus F_{i-1}$. We choose $a$ such that $i$ is minimal. Then
\[ \grd(\gamma_i(a))
 = \gamma_{i+1}(x)
 = 0
 \ ,
\]
because x lies in degree $0$ of the filtration. Now we use \ref{kernel-grdtriv} (or \ref{kernel-grd}) to observe that $\gamma_i(a)=\grd(\gamma_{i-1}(b))$ for some $b\in F_{i-1}\cap(\AC)\itriv$ if $i\geq 1$, or $\gamma_i(a)$ lies in the image of $\DeltaC$ if $i=0$. In the first case, $\gamma_i(a-d(b))=0$, so $a':=a-d(b)\in F_{i-1}$. Now $a=d(b)+a'$ or $a\in\DeltaC(Z(\tiH))$. In the first case, $d(a)=d(a')$ would contradict the minimality of the degree of $a$, and in the second case, $d(a)=0$, which is what we aimed to show.
\end{proof}

We are now ready to refine \ref{kernel-dtriv}.
\begin{prop} \label{homology-of-d} We have
\[
\ker d\itriv = \im d\idet \oplus \DeltaC(Z(\tiH))
\ . 
\]
\end{prop}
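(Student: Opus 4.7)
The inclusion $\ker d\itriv \supset \im d\idet + \DeltaC(Z(\tiH))$ is already provided by \ref{kernel-dtriv}(4). Directness of the sum follows from \ref{lem-sum-direct}: the right summand lies in $F_0$, while $\im d\idet \cap F_0 = 0$.

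For the reverse inclusion, my plan is to prove by induction on $i \geq 0$ that $\ker d\itriv \cap F_i \subset \im d\idet + \DeltaC(Z(\tiH))$. Given $x \in \ker d\itriv \cap F_i$, the identity $\grd(\gamma_i(x)) = \gamma_{i+1}(d(x)) = 0$ places $\gamma_i(x)$ in $\ker\grd\itriv$. Since $\im\grd$ lies in filtration degree $\geq 1$ and $\DeltaC(\tiH) \subset F_0$, the decomposition from \ref{kernel-grdtriv} respects the grading: for $i = 0$ we read off $\gamma_0(x) \in \DeltaC(Z(\tiH))$ and hence $x \in \DeltaC(Z(\tiH))$; for $i \geq 1$ we obtain $\gry \in (\grA_{i-1}\tensor C)\idet$ with $\grd(\gry) = \gamma_i(x)$. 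Assuming a lift $b \in F_{i-1} \cap (\AC)\idet$ of $\gry$ is available, set $x' := x - d(b)$. Then $\gamma_i(x') = 0$, so $x' \in F_{i-1}$; moreover $d(b) \in (\AC)\itriv$ by \ref{kernel-dtriv}(3), so $x' \in (\AC)\itriv$, and $d(x') = -d^2(b) = 0$ by \ref{kernel-dtriv}(4). The inductive hypothesis applied to $x' \in \ker d\itriv \cap F_{i-1}$ then gives $x' = d(b') + \DeltaC(z)$, whence $x = d(b + b') + \DeltaC(z)$ as desired.

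The main obstacle is producing a lift $b \in (\AC)\idet$ of $\gry$, since $\tiH$ is not semisimple and lifts of semi-invariants through short exact sequences of $\tiH$-modules are not automatic. To handle this I would exploit the PBW identification of $A$ with $S(V) \tensor H$ as vector spaces, which supplies a vector-space decomposition
\[
\AC = \bigoplus_{j \geq 0} S^j(V) \tensor H \tensor C
\]
compatible with the filtration. Because the smash product relation $h v = (h_{(1)} \cdot v) h_{(2)}$ implies that left and right multiplication by $H \tensor C$ preserve the $S(V)$-degree, the $\tiH$-action on $\AC$ via $\DeltaC$ preserves each summand, making $S^{j}(V) \tensor H \tensor C$ a $\tiH$-submodule of $\AC$ for every $j$. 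The canonical identification of $S^{i-1}(V) \tensor H \tensor C$ with $\grA_{i-1}\tensor C$ is then $\tiH$-equivariant, since $\grA = S(V)\rtimes H$ and $A$ share the same action of $H$ on the $S(V)$-part in each PBW slice; this furnishes a canonical $\tiH$-equivariant section of $\gamma_{i-1}$, and hence the desired lift $b$.
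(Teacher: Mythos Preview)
Your argument is correct and follows essentially the same route as the paper: reduce to the associated graded via \ref{kernel-grdtriv}, then lift back using a $\tiH$-equivariant section of $\gamma_{i-1}$ and induct on filtration degree. The one point where the paper is more explicit is the construction of the section: it uses the symmetrization map $\eta':S^{i-1}(V)\to T(V)$ and observes that $\eta=\eta'\tensor\id\tensor\id$ is $\tiH$-linear \emph{because $\tiH$ is cocommutative}; your claim that ``the smash product relation implies left and right multiplication by $H\tensor C$ preserve the $S(V)$-degree'' is true only for this particular PBW splitting and only by virtue of cocommutativity, so you should name both ingredients rather than attribute the invariance to the smash relation alone.
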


\begin{proof} After \ref{kernel-dtriv} it suffices to see that the sum on the right-hand side is direct and that it contains $\ker d\itriv$. Now the sum is direct due to \ref{lem-sum-direct}.

Let $P:=\im d\idet\oplus\DeltaC(Z(\tiH))$. It remains to show that $\ker d\itriv\subset P$ and we will prove this by induction in the degree of the filtration on $\ker d\itriv$ inherited from its super space $A\tensor C$. 

First we note that $F_{-1}\cap\ker d\itriv=\{0\}\subset P$ trivially. Now we assume $F_{i-1}\cap\ker d\itriv\subset P$ for some $i\geq 0$. We consider an element $x\in(\AC)\itriv\cap(F_i\setminus F_{i-1})$ such that $d(x)=0$. Passing to the associated graded algebra yields
\[ \grd(\gamma_i(x))
  = \gamma_{i+1}(d(x))
  = 0
\ ,
\] 
and the projection $\gamma_i$ is (by definition of the $\tiH$-action on $\grA\tensor C$) $\tiH$-linear, so $\gamma_i(x)\in\ker\grd\itriv$ can be written as $\gamma_i(x)=\DeltaC(\tih)+\grd(\gry)$ for some $\tih\in Z(\tiH)$ and $\gry\in(\grA_{i-1}\tensor C)\idet$ by \ref{kernel-grd}. We even have
\[ \gamma_i(x)=\gamma_i(\DeltaC(\tih))+\grd(\gry)
\ ,
\]
because the image of $\DeltaC$ lies in $\grA_0\tensor C$.

Next we need to have a section $\eta:\grA_{i-1}\tensor C\to F_{i-1}$ of $\gamma_{i-1}$ as $\tiH$-modules. Let $\eta':S^{i-1}(V)\to T(V)$ be the symmetrization map. We note that $A\tensor C$ and $\grA\tensor C$ are both isomorphic to $S(V)\tensor H\tensor C$ as vector spaces and we define $\eta:=\eta'\tensor\id\tensor\id$ as linear map. This is a section of $\gamma_i$ and since $\tiH$ is cocommutative, it is $\tiH$-linear (recall that $\AC$ is a $\tiH$-module algebra, so $\tiH$ acts diagonally on the tensor product $S(V)\tensor H\tensor C$).

Now let $y:=\eta(\gry)\in F_{i-1}$. Then $\gamma_{i-1}(y)=\gry$ and, as $\eta$ is $\tiH$-linear, $y$ lies in $(\AC)\idet$. Thus $z:=d(y)+\DeltaC(\tih)\in F_i$ lies in $P$ and
\[ \gamma_i(x-z)
 = \gamma_i(x) 
 		- \gamma_i(d(y)) - \gamma_i(\DeltaC(\tih)) 
 = \gamma_i(x) - \grd(\gry) - \gamma_i(\DeltaC(\tih)) 
 = 0
\ ,
\]
so $x-z$ lies in $F_{i-1}(A\tensor C)$.

Finally,
\[ d(x - z)
 = d(x) - d\circ\DeltaC(\tih) - d^2(y) 
 = 0
\]
using that $d(x)=0$ by assumption, $d\circ\DeltaC=0$ and $d^2=0$ by \ref{kernel-dtriv}. This means that $x-z$ is in $\ker d$ and, as $x$ and $z$ lie in $(\AC)\itriv$, their difference has to be an element of this set, too. Hence $x-z$ lies in $F_{i-1}(\ker d\itriv)$, so by the induction hypothesis, $x-z\in P$. Hence $x$ is in $P$.
\end{proof}

\section{Dirac cohomology and Vogan's conjecture}
\label{sec-Vogans-conjecture}
Finally, let us turn our attention to the cohomology of the Dirac element $D$ acting on suitable modules. To this end we fix a spin representation $S$ of $C=C(V)$ and we recall the definition of $H'=\tiH/\ker\DeltaC$ in \ref{def-tiHprime}.

Our results on the cohomology of $d\itriv$ implies the following statement which is called ``algebraic Vogan's conjecture'' in the context of \cite{HP}.
\begin{prop} \label{center-of-A} \label{algebraic-Vogans-conjecture} \label{map-zeta} Let $z$ be an element in the center of $A$. Then there is an element $a\in(\AC)\idet\iodd$ and a uniquely determined element $\zeta(z)\in Z(H')$ such that
\begin{equation} \label{formula-algebraic-Vogan}
 z\tensor 1 = \DeltaC(\zeta(z)) + Da + aD
\ . 
\end{equation}

Moreover, the map $\zeta:Z(A)\to Z(H')$ is an algebra homomorphism.
\end{prop}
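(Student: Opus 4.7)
My plan is to read the desired decomposition off from the homology computation \ref{homology-of-d}. First I verify that $z\tensor 1\in\ker d\itriv$: since $z$ is central in $A$, it commutes with every $\pi(\tih)\in H\subset A$, so $z\tensor 1$ commutes with every $\DeltaC(\tih)=\pi(\tih_{(1)})\tensor\gamma(\tih_{(2)})$; together with the counit axiom this gives $\tih\cdot(z\tensor1)=\eps(\tih)(z\tensor1)$, so $z\tensor 1\in(\AC)\itriv$. Centrality of $z$ also makes it commute with each $v_i\in V\subset A$, so $D(z\tensor1)=(z\tensor1)D$ and $d(z\tensor1)=0$.

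Applying \ref{homology-of-d} yields a decomposition $z\tensor 1=\DeltaC(\tih_z)+d(a)$ with uniquely determined summands, where $\tih_z\in Z(\tiH)$ and $a\in(\AC)\idet$. To obtain the claimed anticommutator form I split this into $\ZZ_2$-graded parts. The key observation is that $\DeltaC$ preserves the $\ZZ_2$-grading: since $\tiH\ieven$ and $\tiH\iodd$ are subcoalgebras (remark after \ref{def-gradation-H}), for a homogeneous $\tih$ the second tensor factor $\tih_{(2)}$ in $\DeltaC(\tih)$ has the same parity as $\tih$, and $\gamma$ is grading-preserving. Hence the $\tiH$-action on $\AC$---conjugation by $\DeltaC$ of the coproduct---respects the $\ZZ_2$-grading, so $(\AC)\idet$ is itself $\ZZ_2$-graded. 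Decomposing $\tih_z$ and $a$ into $\ZZ_2$-homogeneous parts (the even part of $\tih_z$ remains in $Z(\tiH)$ because the grading is an algebra grading, and the odd part of $a$ remains in $(\AC)\idet$), equating the even components of the equation yields
\[ z\tensor 1=\DeltaC(\tih)+d(a')=\DeltaC(\tih)+Da'+a'D, \]
where $\tih\in Z(\tiH)\cap\tiH\ieven$ and $a'\in(\AC)\idet\iodd$ (so $d(a')=Da'+a'D$). I then define $\zeta(z):=[\tih]\in Z(H')$, which is central because $\tih\in Z(\tiH)$; uniqueness in $H'$ follows from the uniqueness of $\DeltaC(\tih)$ in the direct-sum decomposition.

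For the algebra homomorphism property I multiply the defining relations for $z$ and $z'$ and expand $(\DeltaC(\zeta(z))+d(a_z))(\DeltaC(\zeta(z'))+d(a_{z'}))$, obtaining $\DeltaC(\zeta(z)\zeta(z'))$ plus three cross terms which I rewrite as coboundaries using the graded derivation rule \ref{graded-derivation}, the identity $d\circ\DeltaC=0$ from \ref{kernel-dtriv}(2), the evenness of $\DeltaC(\zeta(z))$ and $\DeltaC(\zeta(z'))$, and $d^2=0$ on $(\AC)\idet$ from \ref{kernel-dtriv}(4) (which invokes the Parthasarathy condition): concretely
\[ \DeltaC(\zeta(z))d(a_{z'})=d(\DeltaC(\zeta(z))a_{z'}), \quad d(a_z)\DeltaC(\zeta(z'))=d(a_z\DeltaC(\zeta(z'))), \quad d(a_z)d(a_{z'})=d(a_zd(a_{z'})). \]
A parity count together with the $\tiH$-module algebra relation $\mathrm{triv}\tensor\mathrm{det}=\mathrm{det}$ shows each $d$-argument lies in $(\AC)\idet\iodd$, so uniqueness forces $\zeta(zz')=\zeta(z)\zeta(z')$ in $H'$, while unitality reduces to the tautology $1\tensor 1=\DeltaC(1)$. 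The main obstacle is the $\ZZ_2$-grading bookkeeping required to replace an arbitrary $a$ by an odd one and to recognize the cross terms as coboundaries; once one sees that $\DeltaC$ and the induced $\tiH$-action respect the grading, the remainder is routine algebraic manipulation.
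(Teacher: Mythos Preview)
Your proof is correct and follows essentially the same route as the paper's: both apply \ref{homology-of-d} to $z\tensor 1$, then pass to the even $\ZZ_2$-component to make $a$ odd, and both establish the homomorphism property by expanding the product and recognizing the cross terms as coboundaries via the graded derivation rule together with $d\circ\DeltaC=0$ and $d^2=0$ on $(\AC)\idet$. You are somewhat more explicit than the paper about the grading bookkeeping (in particular, that the $\tiH$-action preserves the $\ZZ_2$-grading so that $a\iodd$ stays in $(\AC)\idet$, and that the $d$-arguments in the cross terms land in $(\AC)\idet\iodd$), which the paper treats more tersely; the paper, conversely, gets $z\tensor1\in(\AC)\itriv$ slightly more directly by observing that $z\tensor1$ is central in $\AC$.
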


\begin{proof} Since $z$ is central in $A$, $z\tensor 1$ is central in $A\tensor C$, in particular it is in $(\AC)\itriv$, and it is in the even part (with respect to the $\ZZ_2$-gradation which $\AC$ inherits from $C$). Hence $d(z\tensor 1)=[D,z\tensor 1]=0$. So by \ref{homology-of-d}, $z\tensor 1=\DeltaC(\tih)+d(a)$ for some $\tih\in Z(\tiH)$ and $a\in(\AC)\idet$, and we even know that $\DeltaC(\tih)$ is uniquely determined, so $\zeta(z)\in Z(H')$ is uniquely determined.

Now $z\tensor 1$ is in the even part of $\AC$, the map $\DeltaC$ preserves the $\ZZ_2$-gradation and the map $d$ inverts it. So we can take the even component of \eqref{formula-algebraic-Vogan} to see that $a$ can be assumed to be odd, and in particular, $d(a)=Da+aD$, which completes the proof of the first assertion.

It remains to show that $\zeta$ is an algebra homomorphism. Consider $z_1,z_2\in Z(A)$, then $z_1z_2$ is central as well and
\[ z_1z_2\tensor1 = \DeltaC(\zeta(z_1z_2))+d(a)
\]
for a uniquely determined $\zeta(z_1z_2)\in Z(H')$ and an element $a\in A\tensor C$. On the other hand,
\[ z_1z_2\tensor 1
 = (z_1\tensor 1)(z_2\tensor 1)
 = (\DeltaC(\zeta(z_1)+d(a_1)) (\DeltaC(\zeta(z_2)+d(a_2))
\]
for suitable $a_1,a_2\in(\AC)\idet$ as in the theorem. Let $\theta$ be the linear automorphism of $\AC$ which multiplies homogeneous elements $x\in\AC$ by $(-1)^{|x|}$. Then the last expression can be simplified to be
\[ \dots
 = \DeltaC(\zeta(z_1)\zeta(z_2))
  + d(  a_1\DeltaC(\zeta(z_2)) + \theta\circ\DeltaC(\zeta(z_1))a_2 
  + a_1 d(a_2)  )
 \ ,
\]
because $\DeltaC$ is an algebra map and $d$ is a graded derivation which sends $\DeltaC(\zeta(z_1)), \DeltaC(\zeta(z_2))$ and $d(a_2)$ to $0$. So by uniqueness, $\zeta(z_1z_2)=\zeta(z_1)\zeta(z_2)$ in $H'$.
\end{proof}

Let us recall the definition of Dirac cohomology (\ref{defn-Dirac-cohomology}): If $M$ is an $A$-module, then $M\tensor S$ is an $A\tensor C$-module, and in particular $D$ acts on $M\tensor S$. The Dirac cohomology was defined to be 
\[ H^D(M):=\ker D / (\im D\cap\ker D)
\ . 
\]

\begin{lem} $H^D(M)$ is an $H'$-module with an action of $H'$ induced by the action of $\tiH$ on $M\tensor S$, where $\tih\in\tiH$ acts as $\DeltaC(\tih)$.
\end{lem}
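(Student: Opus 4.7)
The plan is to first observe that the algebra map $\DeltaC:\tiH\to\AC$ (see Remark \ref{rem-pin-cover}) together with the $\AC$-module structure on $M\tensor S$ makes $M\tensor S$ into a $\tiH$-module, with $\tih\in\tiH$ acting as $\DeltaC(\tih)$. So the only substance of the lemma is to verify that this $\tiH$-action descends to the subquotient $H^D(M)=\ker D/(\ker D\cap\im D)$ and factors through $H'=\tiH/\ker\DeltaC$.

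I would first show that $\ker D$ and $\im D$ are $\tiH$-stable in $M\tensor S$. Fix $\tih\in\tiH$ and decompose it as $\tih=\tih\ieven+\tih\iodd$ (recall that by the remark after \ref{def-gradation-H}, the even and odd parts are actually the $\ZZ_2$-homogeneous components of $\tih$). Applying Lemma \ref{commutation-D-tih} to each homogeneous component gives
\[ D\DeltaC(\tih\ieven) = \DeltaC(\tih\ieven)D,\qquad D\DeltaC(\tih\iodd) = -\DeltaC(\tih\iodd)D. \]
Hence for $m\in\ker D$, $D(\DeltaC(\tih)m) = \DeltaC(\tih\ieven)Dm - \DeltaC(\tih\iodd)Dm = 0$, so $\ker D$ is preserved. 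Similarly, for $m=Dm'\in\im D$ we obtain $\DeltaC(\tih)m = D(\DeltaC(\tih\ieven)m'-\DeltaC(\tih\iodd)m')\in\im D$, so $\im D$ (and therefore $\ker D\cap\im D$) is preserved. This yields a well-defined $\tiH$-action on $H^D(M)$.

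Finally, the action factors through $H'$: by definition, any element of $\ker\DeltaC$ acts as the zero operator already on $M\tensor S$, hence certainly trivially on the subquotient $H^D(M)$. I do not anticipate any real obstacle; the content is the commutation lemma \ref{commutation-D-tih}, and the rest is the standard argument showing that a (super)commuting action of an algebra on a module with a differential descends to its (co)homology.
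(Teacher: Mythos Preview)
Your proposal is correct and follows the same approach as the paper, which simply invokes Lemma~\ref{commutation-D-tih} to conclude that the $\tiH$-action preserves $\ker D$. In fact your argument is slightly more thorough: the paper's proof only states that $\ker D$ is preserved, whereas you also verify explicitly that $\im D$ (hence $\ker D\cap\im D$) is $\tiH$-stable and that the action factors through $H'=\tiH/\ker\DeltaC$, both of which are needed for the lemma as stated.
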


\begin{proof} To show that the action is well-defined, we have to show that the action of $\tiH$ on $M\tensor S$ preserves the kernel of $D$. But this follows from the fact that for $\ZZ_2$-homogeneous $\tih\in\tiH$, $D$ commutes with $\DeltaC(\tih)$ up to a sign (depending on $|\tih|$, \ref{commutation-D-tih}).
\end{proof}

An immediate consequence of the above results is a statement relating the central character of a module with its Dirac cohomology. It is called ``Vogan's conjecture'' in the context of \cite{HP}. 

\begin{thm}[Vogan's conjecture] \label{Vogans-conjecture} Let $M$ be an $A$-module with central character $\chi$. Suppose $H^D(M)\neq0$ and let $(U,\sigma)$ be a non-zero $H'$-submodule of $H^D(M)$.
Then $\chi = \sigma\circ\zeta$ (where $\zeta$ is the algebra map form \ref{map-zeta}).
\end{thm}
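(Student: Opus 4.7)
The plan is to deduce the theorem directly from the algebraic Vogan's conjecture \ref{algebraic-Vogans-conjecture}. Given $z \in Z(A)$, write
\[ z \tensor 1 = \DeltaC(\zeta(z)) + Da + aD \]
for some odd element $a \in (\AC)\idet$. I will then apply both sides as operators on $M \tensor S$ to a representative $m \in \ker D$ of a non-zero class $[m] \in U \subset H^D(M)$. Since $Dm = 0$, the right-hand side collapses to $\DeltaC(\zeta(z))m + Dam$, while the left-hand side equals $\chi(z) m$ because $z$ acts on $M$ by the central character $\chi$, so $z \tensor 1$ acts on $M \tensor S$ by $\chi(z) \cdot \id$.

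The next step is to show that this identity descends to $H^D(M)$ and that the term $Dam$ vanishes there. First, $z \tensor 1$ is central in $\AC$, so $(z \tensor 1)m \in \ker D$. Second, because $z \tensor 1$ is even in the $\ZZ_2$-gradation inherited from $C$ and $a$ was chosen odd (so $Da + aD$ is even), the element $\DeltaC(\zeta(z))$ must be even; by \ref{commutation-D-tih} this means $\DeltaC(\zeta(z))$ commutes with $D$ on the nose, and in particular $\DeltaC(\zeta(z))m \in \ker D$. Subtracting, $Dam \in \ker D$ as well; since $Dam$ is trivially in $\im D$, it represents the zero class in $H^D(M) = \ker D/(\ker D \cap \im D)$.

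Passing to classes, the $H'$-action on $H^D(M)$ is by definition induced from the action of $\tiH$ on $M \tensor S$ through $\DeltaC$, so $[\DeltaC(\zeta(z))m] = \sigma(\zeta(z))[m]$ in $U$. The identity therefore becomes
\[ \chi(z)[m] = \sigma(\zeta(z))[m] \]
in $U$, and since $[m] \neq 0$ and $z \in Z(A)$ was arbitrary, I conclude $\chi = \sigma \circ \zeta$.

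The only point requiring care in this proposal is the $\ZZ_2$-parity bookkeeping: one needs $\DeltaC(\zeta(z))$ to be genuinely even so that it commutes with $D$, rather than merely anticommuting with it, which is what allows $Dam$ to be in $\ker D$ and hence cohomologically trivial. This rests entirely on the freedom to pick $a$ odd in \ref{algebraic-Vogans-conjecture}, which is already built into that proposition. Beyond this parity check, the theorem is a formal module-theoretic consequence of the algebraic identity, and I do not anticipate any additional obstacle.
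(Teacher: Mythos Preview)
Your proposal is correct and essentially identical to the paper's proof. One minor remark: your parity bookkeeping, while correct, is slightly more than needed, since the lemma immediately preceding the theorem already shows that $\DeltaC(\tih)$ preserves $\ker D$ for homogeneous $\tih$ of \emph{either} parity (as $D$ commutes with $\DeltaC(\tih)$ up to sign), so $\DeltaC(\zeta(z))m \in \ker D$ and hence $Dam \in \ker D \cap \im D$ follow without first establishing that $\zeta(z)$ is even.
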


\begin{proof} We fix a central element $z\in Z(A)$. By \ref{center-of-A}, there is an element $a\in A\tensor C$ such that
\[ z\tensor 1 - \DeltaC(\zeta(z)) = Da + aD
\ . 
\]
Let $x$ be a non-zero element in $U$. Letting the two sides of the equation act on $x$ yields
\[ (\chi(z) - \sigma\circ\zeta(z)) x = (Da) \cdot x+(aD) \cdot x = 0
\ . 
\]
Now the left hand hand side is a scalar multiple of $x$ which was chosen to be non-zero, so $\chi(z) = \sigma\circ\zeta(z)$, as desired.
\end{proof}

The following result is useful in order to actually compute the Dirac cohomology for concrete modules (cp. \cite[p.~62-63]{HP-book}):

\begin{lem} \label{lem-D-diagonalizable} If $D$ acts diagonalizably on $M\tensor S$ (e.g., as a normal operator), then $H^D(M)\cong \ker D^2$.
\end{lem}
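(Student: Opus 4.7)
The plan is to exploit diagonalizability of $D$ to simultaneously trivialize the intersection $\ker D\cap\operatorname{im} D$ and identify $\ker D$ with $\ker D^2$.

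First I would write $M\tensor S=\bigoplus_\lambda E_\lambda$ as a direct sum of eigenspaces of $D$, where $E_\lambda$ denotes the $\lambda$-eigenspace (finite sum since $M\tensor S$ is finite-dimensional). Then $\ker D=E_0$ and $\operatorname{im} D=\bigoplus_{\lambda\neq 0}E_\lambda$, because $D$ maps each $E_\lambda$ with $\lambda\neq 0$ bijectively onto itself, whereas $E_0$ is annihilated. Consequently $\ker D\cap\operatorname{im} D=0$, which yields
\[
H^D(M)=\ker D/(\ker D\cap\operatorname{im} D)=\ker D.
\]

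Next I would observe that on each $E_\lambda$, the operator $D^2$ acts as multiplication by $\lambda^2$, so $\ker D^2=\bigoplus_{\lambda:\lambda^2=0}E_\lambda=E_0=\ker D$. Combining with the previous step gives $H^D(M)\cong\ker D^2$.

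For the parenthetical remark about normal operators: over the relevant ground field (in particular over $\CC$, to which one typically specializes when speaking of normality with respect to a Hermitian form on $M\tensor S$), the spectral theorem ensures that a normal operator is diagonalizable, so this case is subsumed. No serious obstacle is anticipated; the only subtlety is making sure the ambient module $M\tensor S$ really does admit an eigenspace decomposition, which is exactly the content of the hypothesis, and that $D$ restricts bijectively to each nonzero eigenspace, which is immediate from the definition of eigenvector.
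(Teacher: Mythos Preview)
Your argument is correct and is exactly the paper's approach: diagonalizability gives $\ker D\cap\im D=0$ and $\ker D=\ker D^2$, whence $H^D(M)=\ker D=\ker D^2$. The paper's proof records precisely these two facts without spelling out the eigenspace decomposition, but the content is identical.
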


\begin{proof} In this case $\im D\cap \ker D=0$ and $\ker D=\ker D^2$.
\end{proof}

\newcommand{\arxiv}[2]{\href{http://arxiv.org/abs/#1}{arXiv:#1} [#2]}


\bigskip
\bigskip

(J. Flake) 
\textsc{Department of Mathematics, Rutgers University,
Hill Center -- Busch Campus, 110 Frelinghuysen
Road Piscataway, NJ 08854--8019, USA} \\
\texttt{flake@math.rutgers.edu}

\end{document}